\documentclass[11pt,reqno, a4paper]{article}

\usepackage[LCY,T1]{fontenc}

\usepackage[english]{babel}
\usepackage[utf8]{inputenc}

\usepackage{algorithm, xpatch}
\usepackage[noend]{algpseudocode}
\usepackage{amsmath, amsthm, amssymb}
\usepackage{mathtools}
\usepackage{thmtools}
\usepackage{microtype}
\usepackage{caption}
\usepackage[a4paper, left=2.5cm, right=2.5cm, top=2.5cm, bottom=3cm, footskip=1.6cm]{geometry}
\numberwithin{equation}{section}

\DeclareMathOperator{\dist}{dist}

\newtheorem{theorem}{Theorem}[section]
\newtheorem{lemma}[theorem]{Lemma}
\newtheorem{corollary}[theorem]{Corollary}
\newtheorem{proposition}[theorem]{Proposition}
\newtheorem{remark}[theorem]{Remark}

\usepackage{hyperref}
\usepackage[nameinlink]{cleveref}
\crefname{prop}{proposition}{propositions}
\Crefname{prop}{Proposition}{Propositions}

\newcommand{\N}{\mathbb{N}}
\newcommand{\Z}{\mathbb{Z}}
\newcommand{\R}{\mathbb{R}}
\newcommand{\C}{\mathbb{C}}

\newcommand{\lin}{\ensuremath{\mathrm{lin}}}
\newcommand{\ip}[2]{\langle #1,#2\rangle}
\newcommand{\norm}[1]{\lVert #1\rVert}

\DeclareMathOperator{\spann}{span}

\DeclareMathOperator*{\argmax}{argmax}

\makeatletter
\def\blfootnote{\xdef\@thefnmark{}\@footnotetext}
\xpatchcmd{\algorithmic}{\itemsep\z@}{\itemsep=.4ex}{}{}
\makeatother

\makeatletter
\def\@seccntformat#1{\@ifundefined{#1@cntformat}%
   {\csname the#1\endcsname\quad}  
   {\csname #1@cntformat\endcsname}
}

\let\oldappendix\appendix 
\renewcommand\appendix{%
    \oldappendix
    \newcommand{\section@cntformat}{\appendixname~\thesection\quad}
    \renewcommand{\theHsection}{appendix.\Alph{section}}
    \renewcommand{\theHsubsection}{appendix.\Alph{section}.\arabic{subsection}}
    \renewcommand{\theHsubsubsection}{appendix.\Alph{section}.\arabic{subsection}.\arabic{subsubsection}}
    \renewcommand{\theHequation}{appendix.\Alph{section}.\arabic{equation}}
    \renewcommand{\theHtheorem}{appendix.\Alph{section}.\arabic{theorem}}
    \renewcommand{\theHfigure}{appendix.\Alph{section}.\arabic{figure}}
    \renewcommand{\theHtable}{appendix.\Alph{section}.\arabic{table}}
}
\makeatother
\usepackage{todonotes}
\usepackage{xcolor}

\begin{document}

\title{Greedy sampling designs via reduced basis methods:\\optimal recovery in the uniform norm}

\author{Sebastian Neumayer $\!{}^{a}$,
Kateryna Pozharska $\!{}^{a,b}$,
Tino Ullrich $\!{}^{a,}$\footnote{Corresponding author, Email:
tino.ullrich@math.tu-chemnitz.de}\\\\
${}^{a}\!$ Chemnitz University of Technology, Faculty of Mathematics\\[2mm]
${}^{b}\!$ Institute of Mathematics of NAS of Ukraine}

\date{\today}

\maketitle

\begin{abstract}
We study optimal sampling recovery in reproducing kernel Hilbert spaces (RKHS) in the uniform norm.
For every RKHS with bounded kernel, we establish new comparisons between linear sampling widths and Gelfand widths that overcome the known square-root gap, without requiring a measure or a Christoffel-type condition.
Our bounds rely on nested sampling designs obtained by kernel interpolation at (weak) \(P\)-greedy points. Under additional (polynomial) decay assumptions the decay rate of the Gelfand widths directly transfers to the sampling widths. With either a  logarithmic oversampling or passing to the square root of the Gelfand widths we obtain a direct comparison (requiring no decay assumption) between them. This is particularly effective for super-polynomial decay, such as in Paley–Wiener spaces.
Our results follow from representations of both widths in terms of kernel translates and yield, in the opposite direction, a new existence result for a sharp reduced basis selection.
Numerical experiments for Legendre, mixed-Sobolev, and Paley–Wiener kernels illustrate our findings.
\end{abstract}

\section{Introduction}\label{sec:intro}
Many problems in the applied sciences require reconstructing a quantity of interest from a finite number of measurements.
Mathematically, we model this quantity by a function $f\colon D\to\C$ on a (possibly high-dimensional) set $D$, and the corresponding measurements can be, for example, Fourier coefficients, local averages, or point evaluations $f(x_1),\ldots,f(x_m)$.
The task is to accurately recover $f$ from this information.
For this, one commonly assumes that $f \in \cal F$ belongs to a (regularity) class, where $\cal F$ usually denotes a subset of a normed function space $F$ continuously embedded into $B(D)$, the space of bounded complex-valued functions on $D$ equipped with $\norm{f}_\infty \coloneqq \sup_{x\in D}|f(x)|$.
A key question is how the achievable worst-case accuracy depends on the type of measurements.
In this paper, we focus on the recovery problem in the uniform norm and compare two extremes: arbitrary linear information, leading to the \emph{Gelfand widths}, and sample information, leading to the \emph{linear sampling widths}, which are defined, respectively, as
\begin{align}
c_n(\mathcal{F})_\infty &\coloneqq \inf_{\substack{\psi\colon\C^n\to B(D)\\ N\colon F \to \C^n \text{ bounded linear}}}
\sup_{f\in \mathcal{F}}\norm{f-\psi(Nf)}_\infty,\\
g_m^{\lin}(\mathcal{F})_\infty &\coloneqq \inf_{\substack{x_1,\dots,x_m\in D\\ \varphi\colon \C^m\to B(D) \text{ linear}}}
\sup_{f\in \mathcal{F}} \bigl\| f-\varphi\bigl(f(x_1),\ldots,f(x_m)\bigr)\bigr\|_\infty \label{eq:glin}
\end{align}
with the convention $g_0^{\lin}(\mathcal{F})_\infty = c_0(\mathcal{F})_\infty =\sup_{f\in \mathcal{F}}\norm{f}_\infty$.
Thus, $g_m^{\lin}(\mathcal{F})_\infty$ is the minimal worst-case error of a linear algorithm using $m$ samples, while $c_n(\mathcal{F})_\infty$ allows arbitrary algorithms and arbitrary linear information.
Consequently, the Gelfand widths $c_m(\mathcal{F})_\infty$ are a lower bound for $g_m^{\lin}(\mathcal{F})_\infty$ and serve as a benchmark for the optimality of recovery methods.
The interesting direction is the converse one: how much is lost by restricting oneself to sample information?
The recovery problem in the uniform norm has attracted considerable attention in recent years \cite{PU}, \cite{KPUU23, KPUU24}, following a breakthrough in $L_2$-recovery \cite{NaSchul22, Te21, DoKrUl23}.

Our goal in this paper is to establish a new transference principle which allows for using greedy methods from the reduced basis community. Therefore, our study involves several additional notions of approximation errors.
We write $g_m(\mathcal{F})_\infty$ for the quantity defined as in \eqref{eq:glin} but with arbitrary, possibly non-linear recovery maps $\varphi\colon\C^m\to B(D)$.
For the classes considered here, namely an RKHS with bounded kernel, we have $g_m=g_m^{\lin}$, see Proposition~\ref{prop:gm}.
Relations like this are classical and known to hold in wider situations, see \cite[Sect.\ 4.2.2]{NW1}.
Another relevant quantity is the \emph{Kolmogorov $n$-width}, which measures approximation by linear subspaces.
For a subset $\mathcal F$ of a normed space $X$, it is defined by
\begin{equation}\label{eq:dn-general}
  d_n(\mathcal F)_X \coloneqq \inf_{\substack{V\subset X\\ \dim V\le n}} \ \sup_{a\in\mathcal F} \ \dist_X(a,V) .
\end{equation}
It is important to note that $\mathcal F$ does not need to have any structure here (convexity, symmetry, unit ball...). In addition, $V \subset X$ need not be spanned by elements of $\mathcal F$. 
The latter restriction leads to the restricted version $\tau_n(\mathcal{F})_X$ of the Kolmogorov width defined in \eqref{eq:tau_m}, see also \cite{Wojtaszczyk15}.   
 
Various relations between sampling, Gelfand and Kolmogorov widths have been investigated in recent papers, see for instance \cite{KPUU24}; for the current state of the art we refer to the survey \cite{Survey26} and the references therein.
According to \cite[Thm.~20 and eq.~(14)]{KPUU24}, whenever $\mathcal{F}$ is the unit ball of a normed function space that embeds continuously into $B(D)$, it holds that
\begin{equation}\label{eq:KPUU-sqrtn}
g_{2n}^{\lin}(\mathcal{F})_\infty \le C\sqrt{n}\, d_n(\mathcal{F})_\infty \leq C\sqrt{n}\,c_n(\mathcal{F})_\infty,
\end{equation}
where $C$ is an absolute constant.
In general, the polynomial order of the factor on the right-hand side cannot be removed, even if polynomial oversampling is allowed, in the sense that for no fixed $\kappa$ does $g_{n^{\kappa}}^{\lin} (\mathcal{F})_\infty  \lesssim c_n (\mathcal{F})_\infty$ hold uniformly for $\mathcal{F}$. 
A counterexample is given by the unit balls of the Besov spaces $B^s_{1,q}([0,1])$ with $s>1$ and $1\leq q \leq \infty$, see \cite[Sec.~5.3]{KPUU24}.
 
Throughout this paper, we restrict ourselves to $\mathcal F = \mathcal H_K$ (slight abuse of notation) being a reproducing kernel Hilbert space (RKHS) on $D$ with bounded kernel $K\colon D\times D\rightarrow\mathbb{C}$ of radius
\begin{equation}\label{eq:R}
R\coloneqq\sup_{x\in D}\sqrt{K(x,x)}<\infty ,
\end{equation}
and let $B_{\mathcal H_K}$ denote its unit ball.
By \eqref{eq:R} and the reproducing property $f(x)=\ip{f}{K(\cdot,x)}_{\mathcal H_K}$, which holds for all $f\in \mathcal H_K$, $x\in D$, one has $g_0^{\lin}(B_{\mathcal H_K})_\infty=c_0(B_{\mathcal H_K})_\infty=R$.
In this setting, a constant-factor comparison is known under additional structural assumptions.
Specifically, \cite[Thm.~11]{KPUU23} assumes a finite measure $\mu$ on $D$ such that every $f\in\mathcal H_K$ is $\mu$-measurable and $\mathcal H_K\subset L_2(\mu)$, and a singular system yielding the Mercer representation $K(x,y)=\sum_{k\ge1}\sigma_k^2b_k(x)\overline{b_k(y)}$.
If the Christoffel-type condition
\begin{equation}\label{eq:cond8}
\sup_{n\in\N}\sup_{x\in D}
\frac1n\sum_{k\le n}|b_k(x)|^2\le B
\end{equation}
is satisfied, then
\begin{equation}\label{eq:thm11}
g_{\kappa n}^{\lin}(B_{\mathcal H_K})_\infty
\le \sqrt{BC'\mu(D)}\,
c_n(B_{\mathcal H_K})_\infty
\end{equation}
for an absolute constant $\kappa>1$.
This leads us to the following two questions:

\begin{enumerate}
\item[(Q1)] Do the usual scales of decay of $c_n(B_{\mathcal H_K})_\infty$ transfer to $g_n^{\lin}(B_{\mathcal H_K})_\infty$ for every RKHS with bounded kernel?
\item[(Q2)] Can the factor $\sqrt{n}$ in the $c_n$ bound of \eqref{eq:KPUU-sqrtn} be removed for $\mathcal{F}=B_{\mathcal H_K}$, i.e.\ when $\mathcal{F}$ is the unit ball of an RKHS with $R<\infty$?
\end{enumerate}
To answer these questions, our main tool is  the ``dictionary'' of kernel translates
\begin{equation}\label{eq:dictionary}
\mathcal{K}\coloneqq\{K(\cdot,x)\colon x\in D\}\subset \mathcal H_K .
\end{equation}
The key observation for our analysis is that $g_m^{\lin}(B_{\mathcal H_K})_\infty$ is the smallest supremum of the kernel power function attainable by a choice of $m$ sampling points, i.e.,
\begin{equation}
    g_m(B_{\mathcal H_K})_\infty = g_m^{\lin}(B_{\mathcal H_K})_\infty = \tau_m(\mathcal{K})_{\mathcal{H}_K} = \inf_{|P|\le m} \sup_{x\in D}\dist_{\mathcal H_K}\bigl(K(\cdot,x), V_P\bigr),
\end{equation}
see Proposition \ref{prop:gm} for the notation.
We further have 
\begin{equation}\label{eq:cn-dn}
c_n(B_{\mathcal H_K})_\infty = d_n(\mathcal{K})_{\mathcal H_K}
= \inf_{\substack{V\subset\mathcal H_K\\ \dim V \leq n}} \sup_{x\in D} \dist_{\mathcal H_K}\bigl(K(\cdot,x),V\bigr),
\end{equation}
where $d_n(\mathcal{K})_{\mathcal H_K}$ denotes the Kolmogorov $n$-width \eqref{eq:dn-general} of $\mathcal{K}$ for the target space $X=\mathcal H_K$, i.e.\ \emph{measured in the norm of $\mathcal H_K$} and with subspaces taken from $\mathcal H_K$, see Propositions~\ref{prop:gm} and~\ref{prop:cn}.

What remains is a geometric question: How well can $\mathcal{K}$ be approximated by spans of finitely many of
\emph{its own} elements, compared with arbitrary subspaces of the same dimension?
This is an instance of reduced basis subset selection, so that the theory of greedy algorithms in Hilbert spaces \cite{BCDDPW11,DPW13} applies.
Through the kernel-translate dictionary, comparisons between greedy and Kolmogorov widths become comparisons between sampling and Gelfand widths.
The Carl-type comparison in Theorem~\ref{thm:rates} answers (Q1) for the usual decay scales.
Theorem~\ref{thm:log} further yields an absolute constant at the price of oversampling, namely a partial answer to (Q2).
Further, reversing the transfer gives a sharp existence result for reduced basis subset selection.
Through the kernel-translate dictionary, the weak greedy method constructs a nested sampling design by selecting $\gamma$-approximate maximizers of the power function.
Kernel interpolation at the resulting points then provides the recovery, see Remark~\ref{rem:nested}.
 
\section{Main results}\label{sec:MainResults}
First, we relate linear sampling widths and Gelfand widths in the uniform norm.
\begin{theorem}[Carl-type comparison]\label{thm:rates}
Let $L\colon[0,\infty)\to(0,\infty)$ be non-decreasing with $L(0)>0$ and
\begin{equation}\label{eq:LambdaL}
  \Lambda_L\coloneqq\sup_{s\ge1}\frac{L(8s)^2}{L(2s)L(s)}<\infty .
\end{equation}
Then for every set $D$, every RKHS $\mathcal H_K$ on $D$ with bounded kernel $K$, and every $n\in\N$, we have
\begin{equation}\label{eq:carl}
    \sup_{0\le k\le n} L(k) g_k^{\lin}(B_{\mathcal H_K})_\infty 
    \le C_L\sup_{0\le k\le n} L(k) c_k(B_{\mathcal H_K})_\infty,
    \qquad C_L\coloneqq\max\bigl\{L(3)/L(0), 8\Lambda_L\bigr\}.
  \end{equation}
\end{theorem}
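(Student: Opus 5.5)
Via the identities stated in the introduction (Propositions~\ref{prop:gm} and~\ref{prop:cn}), we have $g_k^{\lin}(B_{\mathcal H_K})_\infty=\tau_k(\mathcal K)_{\mathcal H_K}$ and $c_k(B_{\mathcal H_K})_\infty=d_k(\mathcal K)_{\mathcal H_K}$, where $\mathcal K$ is the dictionary \eqref{eq:dictionary}. This is a compact subset of the Hilbert space $\mathcal H_K$ with $\sup_{a\in\mathcal K}\norm{a}=R$. The statement therefore becomes a purely Hilbert-space statement about greedy selection. We will prove that, for the (weak, with $\gamma$ arbitrarily close to $1$) greedy algorithm applied to $\mathcal K$ with errors $\sigma_k\coloneqq\sup_{x}\dist(K(\cdot,x),V_k)$, we have $\sup_{k\le n}L(k)\sigma_k\le C_L\sup_{k\le n}L(k)d_k$. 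Since $\tau_k\le\sigma_k$, the claim follows. Because $P$-greedy maximizers might not exist, we work with $\gamma<1$ and let $\gamma\to1$ at the end.

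The key tool is the flatness inequality of \cite{BCDDPW11,DPW13} (Binev et al.\ / DeVore--Petrova--Wojtaszczyk). For every $N\ge0$, $1\le m<K'$,
\begin{equation*}
\prod_{i=1}^{K'}\sigma_{N+i}^2\le \gamma^{-2K'}\Bigl(\tfrac{K'}{m}\Bigr)^m\Bigl(\tfrac{K'}{K'-m}\Bigr)^{K'-m} d_m^{2m}\,\sigma_{N+1}^{2(K'-m)}.
\end{equation*}
Its proof uses the lower-triangular Gram--Schmidt matrix of the selected elements and a projection onto an optimal $m$-dimensional subspace. We will quote it (or re-derive it in a few lines from the triangular structure). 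Monotonicity of $\sigma_k$ gives $\sigma_{N+K'}^{2K'}\le\prod\sigma_{N+i}^2$.

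Next, set $M\coloneqq\sup_{k\le n}L(k)d_k$, which bounds $d_k\le M/L(k)$. For small $k\le 3$, use $\sigma_k\le\sigma_0=R=d_0\le M/L(0)$, so $L(k)\sigma_k\le (L(3)/L(0))M$; this accounts for the first entry in $C_L$. For $k\ge4$, write $k\approx 2s$ with $s=\lfloor k/2\rfloor$ or similar, and apply the inequality with $N=s$, $K'=k-s$, $m=\lfloor s/2\rfloor$ (roughly $K'=2m$), which gives the factor $(K'/m)^m(K'/(K'-m))^{K'-m}\le 4^{K'}$. Taking $K'$-th roots yields
\begin{equation*}
\sigma_k\le \tfrac{2}{\gamma}\, d_m^{m/K'}\sigma_{s}^{1-m/K'}\approx \tfrac{2}{\gamma}\sqrt{d_m\sigma_s}.
\end{equation*}
We then argue by induction (or by contradiction with the maximal index violating the bound). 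Assuming $L(j)\sigma_j\le C M$ for $j<k$, we get $\sigma_k\le 2\sqrt{(M/L(m))(CM/L(s))}$. Then $L(k)\sigma_k\le 2M\sqrt{C}\,\sqrt{L(k)^2/(L(m)L(s))}$. With $k\le 8m'$-type index relations, the ratio is bounded by $\Lambda_L$ via \eqref{eq:LambdaL}, using monotonicity of $L$ to shift indices. So $L(k)\sigma_k\le 2\sqrt{C\Lambda_L}M$, which is $\le CM$ as soon as $C\ge 4\Lambda_L$; the choice $C=8\Lambda_L$ leaves slack for rounding and for $\gamma<1$.

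The main obstacle is the integer bookkeeping. The exponents $m/K'$ are not exactly $1/2$, and the indices $m,s$ must be matched to the pattern $L(8s)^2/(L(2s)L(s))$ with $s\ge1$. When $m/K'\ne1/2$ we bound $d_m^{m/K'}\sigma_s^{1-m/K'}$ using $d_m\le\sigma_s$-type comparisons (or $d_m\le R$ together with $\sigma_s\le R$) to reduce to the geometric mean. We will choose $m=\lfloor k/8\rfloor$-ish and $N$ so that $L(k)\le L(8m)$ up to monotonicity, check that the constants stay below $8\Lambda_L$, and let $\gamma\to1$.
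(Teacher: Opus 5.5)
Your overall route is the paper's. You transfer to the dictionary $\mathcal K$ via Propositions~\ref{prop:gm} and~\ref{prop:cn}, run a weak greedy algorithm, and feed the DeVore--Petrova--Wojtaszczyk flatness estimate into a self-improving bound. You treat $k\le 3$ with $\sigma_k\le R\le M/L(0)$ and close with $\Lambda_L$. Your strong induction is equivalent to the paper's maximal-index quantity $Q$, and letting $\gamma\to1$ instead of fixing $\gamma=\tfrac12$ is harmless and yields a constant no larger than $C_L$.

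The problem is the step you flag as the main obstacle: it does not work as written. With your parameters $N=\lfloor k/2\rfloor$, $K'=\lceil k/2\rceil$, $m=\lfloor k/4\rfloor$, the exponent $\theta=m/K'$ is strictly below $\tfrac12$ whenever $4\nmid k$ (e.g.\ $\theta=\tfrac13$ for $k=5$). Then
\[
d_m^{\theta}\sigma_{N+1}^{1-\theta}=\sqrt{d_m\sigma_{N+1}}\,(\sigma_{N+1}/d_m)^{1/2-\theta},
\]
which is bounded by the geometric mean only if $\sigma_{N+1}\le d_m$. That is the opposite of the comparison you propose. Moreover, $d_m\le\sigma_s$ is false in general for $m<s$: for $m+1$ orthonormal vectors, $\sigma_{m+1}=0<d_m$. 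The crude bounds $d_m,\sigma_s\le R$ only produce the unbounded factor $(R/d_m)^{1/2-\theta}$. If you instead insert the inductive hypotheses directly, you get $C^{1-\theta}$ with $1-\theta>\tfrac12$. This loses the $\sqrt C$ self-improvement on which your closing inequality $2\sqrt{C\Lambda_L}\le C$ rests. Your alternative $m=\lfloor k/8\rfloor$ gives $k\ge 8m$, so the needed $L(k)\le L(8m)$ points the wrong way.

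The fix is to stop forcing $N+K'=k$. Take $s=\lfloor k/4\rfloor\ge1$ and apply the flatness inequality with $N=K'=2s$ and $m=s$. Then the exponent is exactly $\tfrac12$ and the combinatorial factor is $4^s$, giving $\sigma_{4s}\le\sqrt2\gamma^{-1}\sqrt{\sigma_{2s}d_s}$, which is \eqref{eq:DPW37}. Now use $\sigma_k\le\sigma_{4s}$ and $k\le 4s+3\le 8s$, so the ratio is $L(8s)^2/(L(2s)L(s))\le\Lambda_L$. This is precisely the paper's argument.

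Separately, $\mathcal K$ is in general only bounded, not compact. For $K(x,y)=\delta_{x,y}$ on an infinite $D$ it is an orthonormal system. So you cannot quote the DPW estimate verbatim, since it is stated for compact sets and uses an optimal $m$-dimensional subspace. As the paper notes after Proposition~\ref{prop:dpw}, the proof goes through with an $\varepsilon$-optimal subspace, letting $\varepsilon\to0$.
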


\begin{remark}[Regularly varying functions]
\label{rem:RV}
Condition \eqref{eq:LambdaL} is satisfied by every non-decreasing $L$ that is regularly varying in the sense of Karamata \cite{BGT87} of index $\alpha\ge0$, i.e.\ $\lim_{u\to\infty}L(\lambda u)/L(u)=\lambda^{\alpha}$ for all $\lambda>0$.
Indeed, $L(8s)^2/(L(2s)L(s))\to 2^{5\alpha}$ by the defining limit at $\lambda=8$ and $\lambda=2$. Standard monotonicity arguments bound the quotient for bounded $s$.
Important examples are $L(k)=(1+k)^{\alpha}$ and $L(k)=(1+k)^{\alpha}(\log(2+k))^{\beta}$ with $\alpha>0$, $\beta\in\R$. For $L(k)=(1+k)^{\alpha}$, one has $\Lambda_L=2^{5\alpha}$, hence $C_L\le 2^{5\alpha+3}$. 
\end{remark}
The term  ``Carl-type comparison'' originates in Carl's famous inequality $\sup_{1\le k\le n}k^{\rho}e_k(T)\le c_\rho\sup_{1\le k\le n}k^{\rho}a_k(T)$ between entropy numbers $e_k$ and approximation numbers $a_k$, see \cite{Carl81,CarlStephani90}.
Specializing $L$ gives the following.

\begin{corollary}[Polynomial rate transfer]\label{cor:rates}
    Let $\mathcal H_K$ be an RKHS on $D$ with bounded kernel~$K$.
    \begin{itemize}
    \item[(i)]
    If for all $n\in\N$ it holds $c_n(B_{\mathcal H_K})_\infty  \le C_0(1+n)^{-\alpha}$ with some $\alpha, C_0 >0$, then it holds
    \begin{equation}
        g_{n}^{\lin}(B_{\mathcal H_K})_\infty  \le 2^{5\alpha+3}\max\{C_0,R\} (1+n)^{-\alpha}.
    \end{equation}

    \item[(ii)] If $c_n(B_{\mathcal H_K})_\infty \le C_0 n^{-\alpha}(\log n)^{-\beta}$ for all $n\ge2$, with either $\alpha>0$, $\beta\in\R$, or $\alpha=0$, $\beta>0$, then
    \begin{equation}
        g_n^{\lin}(B_{\mathcal H_K})_\infty \le C(\alpha,\beta)\max\{C_0,R\}
        n^{-\alpha}(\log n)^{-\beta}, \qquad n\geq 2.
    \end{equation}
\end{itemize}
\end{corollary}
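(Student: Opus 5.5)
The plan is to derive both parts directly from Theorem~\ref{thm:rates} by choosing $L$ to match the assumed decay and then controlling the right-hand side of \eqref{eq:carl} through the hypothesis on $c_k$. The term $k=0$ is handled by $c_0(B_{\mathcal H_K})_\infty=R$. Throughout we use that $g_k^{\lin}$ and $c_k$ are non-increasing in $k$ and bounded by $R$.

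For (i) take $L(k)=(1+k)^{\alpha}$. By Remark~\ref{rem:RV}, $\Lambda_L=2^{5\alpha}$. Also $L(3)/L(0)=4^{\alpha}\le 2^{5\alpha+3}$, so $C_L\le 2^{5\alpha+3}$. Two bounds on the right-hand side of \eqref{eq:carl} follow:
\begin{itemize}
\item for $k\ge1$, $L(k)c_k\le C_0$ by the hypothesis;
\item for $k=0$, $L(0)c_0=R$.
\end{itemize}
Hence the supremum is at most $\max\{C_0,R\}$. On the left-hand side we keep only the term $k=n$, which gives $(1+n)^{\alpha}g_n^{\lin}\le 2^{5\alpha+3}\max\{C_0,R\}$, as claimed. (We should check that $\Lambda_L$ is exactly $2^{5\alpha}$, or at least bounded by it. The quotient $(1+8s)^{2\alpha}/((1+2s)(1+s))^{\alpha}$ increases towards $2^{5\alpha}$, since $(1+8s)^2/((1+2s)(1+s))$ is increasing in $s$. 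This is a one-line derivative check.)

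For (ii) take $L(k)=(2+k)^{\alpha}(\log(2+k))^{\beta}$. For $\alpha=0$, $\beta>0$ this is $(\log(2+k))^{\beta}$. In both admissible cases we need $L$ to be positive and non-decreasing with $\Lambda_L<\infty$.
\begin{itemize}
\item Positivity holds since $\log 2>0$.
\item Finiteness of $\Lambda_L$ follows from regular variation of index $\alpha$ (Remark~\ref{rem:RV}), together with continuity and positivity on compact ranges of $s$.
\item Monotonicity is the delicate point when $\alpha>0$ and $\beta<0$: the derivative of $\log L$ is $\alpha/(2+k)+\beta/((2+k)\log(2+k))$, which may be negative for small $k$.
\end{itemize}
To fix monotonicity, we replace $L$ by $\tilde L(k)=L(\max\{k,k_0\})$ with $k_0$ depending on $(\alpha,\beta)$ and chosen so that $L$ is increasing on $[k_0,\infty)$. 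This $\tilde L$ is non-decreasing, still regularly varying, and comparable to $L$ up to constants depending only on $\alpha,\beta$.

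Then, for $k\ge2$, $\tilde L(k)c_k\le C(\alpha,\beta)\,k^{\alpha}(\log k)^{\beta}\cdot C_0k^{-\alpha}(\log k)^{-\beta}=C(\alpha,\beta)C_0$. For $k\in\{0,1\}$ we bound $c_k\le R$ and $\tilde L(k)\le C(\alpha,\beta)$. So the right-hand side of \eqref{eq:carl} is $\lesssim_{\alpha,\beta}\max\{C_0,R\}$. Reading off the term $k=n$ on the left and using $\tilde L(n)\gtrsim n^{\alpha}(\log n)^{\beta}$ for $n\ge2$ gives (ii).

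The only real obstacle is this monotonicity and constant bookkeeping for the logarithmic weight with negative $\beta$ (and $\Lambda_L<\infty$ for bounded $s$); everything else is a direct substitution.
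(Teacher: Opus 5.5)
Your proposal is correct and follows the paper's route. You specialize Theorem~\ref{thm:rates} to $L(k)=(1+k)^{\alpha}$, where $\Lambda_L=2^{5\alpha}$ gives $C_L=2^{5\alpha+3}$, and to a logarithmic weight, using $c_0=R$ for the $k=0$ term. The only cosmetic difference is that for $\alpha>0$, $\beta<0$ you make the weight monotone by freezing it, $\widetilde L(k)=L(\max\{k,k_0\})$, whereas the paper uses the running maximum $\max_{0\le j\le k}L(j)$; both are regularly varying of index $\alpha$ and comparable to $n^{\alpha}(\log n)^{\beta}$ for $n\ge 2$, so either choice works.
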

In part (ii), for $\alpha>0$, $\beta<0$, the weight $L(k)=(1+k)^{\alpha}(\log(2+k))^{\beta}$ is not non-decreasing.
Theorem~\ref{thm:rates} is then applied to the majorant $\widetilde L(k)\coloneqq\max_{0\le j\le k}L(j)$, which leaves the rate unchanged.
Part~(i) recovers the constant obtained in
\cite[Cor.~3.3(ii)]{DPW13} for $\gamma=\tfrac12$; part~(ii), and more generally the comparison of Theorem~\ref{thm:rates}, does not follow from it.
Our result also does not contradict the sharpness of \eqref{eq:KPUU-sqrtn} established in \cite[Sec.~5.3]{KPUU24}:
the corresponding Besov ball is non-Hilbert, so Proposition~\ref{prop:cn} does not apply.

In summary, Theorem~\ref{thm:rates} replaces the factor $\sqrt n$ in \eqref{eq:KPUU-sqrtn} by a rate-dependent constant together with the weighted suprema in \eqref{eq:carl}.
By contrast, condition \eqref{eq:cond8} yields an absolute constant independent of the decay rate and remains valid without any decay assumption.
The decay assumption can also be dropped in our setting, at the cost of taking a square root.
 
\begin{theorem}[Square root bound]\label{thm:geom}
Let $\mathcal H_K  $ be an RKHS on $D$ with bounded kernel $K$ and $R = \sup_{x\in D}\sqrt{K(x,x)}$. Then for all $n\in\N$ it holds
\begin{equation}
g_{2n}^{\lin}(B_{\mathcal H_K})_\infty  \le \sqrt{8R \, c_n(B_{\mathcal H_K})_\infty}.
\end{equation}
\end{theorem}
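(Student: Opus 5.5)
We reduce the statement to the reduced basis setting via the dictionary $\mathcal K$ from \eqref{eq:dictionary}. By Proposition~\ref{prop:gm}, $g_{2n}^{\lin}(B_{\mathcal H_K})_\infty=\tau_{2n}(\mathcal K)_{\mathcal H_K}$, the smallest achievable supremum of the power function over $2n$ points. By Proposition~\ref{prop:cn}, $c_n(B_{\mathcal H_K})_\infty=d_n(\mathcal K)_{\mathcal H_K}$. Moreover, every element of $\mathcal K$ has norm $\sqrt{K(x,x)}\le R$. It therefore suffices to show that for a compact-type set $\mathcal K$ of radius $R$ in a Hilbert space, the (pure) greedy algorithm satisfies $\sigma_{2n}\le\sqrt{8R\,d_n}$. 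Here $\sigma_k$ denotes the greedy error after $k$ steps, and $\tau_{2n}\le\sigma_{2n}$.

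We use the strong greedy method with a weak parameter $\gamma$ close to $1$, since a supremum may not be attained. Choose $x_1,x_2,\dots$ with $\dist(K(\cdot,x_{k+1}),V_k)\ge\gamma\sigma_k$, where $V_k=\spann\{K(\cdot,x_j)\}_{j\le k}$ and $\sigma_k=\sup_x\dist(K(\cdot,x),V_k)$. Then $\sigma_k$ is non-increasing. The key tool is the DeVore--Petrova--Wojtaszczyk inequality from \cite{DPW13} (Theorem 2.2/Cor.~3.3 there). For $N\ge0$, $K\ge1$, $1\le m<K$,
\begin{equation*}
\prod_{i=1}^{K}\sigma_{N+i}^2\le\gamma^{-2K}\Bigl(\frac{K}{m}\Bigr)^m\Bigl(\frac{K}{K-m}\Bigr)^{K-m}\sigma_{N+1}^{2m}\,d_m^{2(K-m)}.
\end{equation*}
We apply it with $N=0$, $K=2n$, $m=n$, which gives
\[
\sigma_{2n}^{4n}\le\prod_{i=1}^{2n}\sigma_i^2\le\gamma^{-4n}\,2^{2n}\,\sigma_1^{2n}\,d_n^{2n}.
\]
Taking $4n$-th roots and using $\sigma_1\le\sigma_0\le R$ yields $\sigma_{2n}\le\gamma^{-1}\sqrt{2R\,d_n}$, which is even better than the claimed $\sqrt{8R\,c_n}$. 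Letting $\gamma\to1$ (or taking $\gamma$ fixed and noting that the claim allows a factor $2$ of slack) finishes the proof.

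Since the paper may prefer a self-contained argument, we would reprove the needed product inequality. Let $a_{ij}$ be the lower triangular Gram–Schmidt coefficients of the selected translates $K(\cdot,x_i)$, so that $|a_{ii}|=\dist(K(\cdot,x_i),V_{i-1})\ge\gamma\sigma_{i-1}$. Then we project the rows onto an optimal $n$-dimensional subspace $Y$ with $\sup_x\dist(K(\cdot,x),Y)\le d_n+\varepsilon$. The matrix splits as $G+B$, where $G$ has rank at most $n$ and rows of norm at most $R$, and $B$ has rows of norm at most $d_n+\varepsilon$. A Hadamard/volume argument bounds $\prod_{i}|a_{ii}|$ over a block of $2n$ rows by $(\text{rank-}n\text{ part})\times(\text{residual})$, which gives the factor $2^{n}R^nd_n^n$ up to constants. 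We expect this determinant estimate to be the main obstacle: in particular, controlling the mixed terms when expanding $\det(G+B)$ restricted to a $2n\times2n$ block. The cleanest route is the Cauchy–Binet / Gram-determinant inequality used in \cite{DPW13}. The constant $8$ in the statement suggests that the authors use a cruder variant, e.g.\ with $\gamma=\tfrac12$ and without the $\varepsilon$ optimization, so we would not fight over constants. Finally, Remark~\ref{rem:nested} ensures that kernel interpolation at $x_1,\dots,x_{2n}$ realizes this error, completing the bound on $g_{2n}^{\lin}$.
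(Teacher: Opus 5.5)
Your proposal is correct and is essentially the paper's proof. Both transfer the problem to the kernel-translate dictionary via Propositions~\ref{prop:gm} and~\ref{prop:cn} and then apply the estimate of \cite{DPW13}; their Cor.~3.3(i) is exactly their product inequality with $N=0$, $K=2n$, $m=n$. The paper applies it to the normalized set $R^{-1}\mathcal K$ with $\gamma=\tfrac12$, which is where the constant $\sqrt8$ comes from.

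Your remark that letting $\gamma\to1$ gives the sharper bound $\sqrt{2Rc_n}$ is valid, because $\tau_{2n}$ does not depend on $\gamma$. You are also right to work with a subspace of error at most $d_n+\varepsilon$ instead of an optimal one, as the paper also notes, because $\mathcal K$ need not be compact.

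The self-contained determinant reproof is unnecessary. In \cite{DPW13} that step goes through Fischer's inequality plus AM--GM, not an expansion of $\det(G+B)$, so no mixed terms arise.
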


Theorem~\ref{thm:geom} transfers the greedy comparison
\cite[Cor.~3.3(i)]{DPW13} through \eqref{eq:greedy-vs-g}.
Its exponential-rate consequence below corresponds to \cite[Cor.~3.3(iii)]{DPW13}.
This requires no decay assumption and improves \eqref{eq:KPUU-sqrtn} whenever $c_n(B_{\mathcal H_K})_\infty\ge 8R/(C^2n)$.
For example, if $c_n(B_{\mathcal H_K})_\infty\le C_0\exp(-\theta n^\alpha)$, which lies outside Karamata's class, then with $m=2n$ it holds
\begin{equation}
g_m^{\lin}(B_{\mathcal H_K})_\infty
\le \sqrt{8RC_0}\,
\exp\bigl(-\theta m^\alpha/2^{1+\alpha}\bigr).
\end{equation}
Thus, the order $m^\alpha$ in the exponent is preserved, although its
constant is reduced by $2^{1+\alpha}$. 
The estimate \eqref{eq:KPUU-sqrtn} gives a better exponential constant up to a
polynomial prefactor, but is nonconstructive, whereas
Theorem~\ref{thm:geom} is constructive and requires no prescribed decay.
The Paley--Wiener example in Subsection~\ref{sec:Numerical_results}
exhibits such a decay.

An absolute constant is known under exponential oversampling:
if $D\subset\R^d$ and $\mathcal F\subset C(D)$ are compact, then $g_{9^n}(\mathcal F)_\infty\le5d_n(\mathcal F)_\infty$
\cite{KKT21}.
For convex and symmetric $\mathcal F$, one moreover has $d_n(\mathcal F)_\infty\le c_n(\mathcal F)_\infty$.
The entropy estimate $g_n^{\lin}(\mathcal F)_\infty\le(n+1)\varepsilon_n(\mathcal F)_\infty$
from \cite{U26} avoids oversampling, but pays a factor of order $n$.
For bounded-kernel RKHSs, the following result reduces the exponential sampling budget to an oversampling factor logarithmic in $R/c_n$.
 
\begin{theorem}[Direct comparison via oversampling]\label{thm:log}
Let $\mathcal H_K$ be an RKHS on $D$ with bounded kernel $K$ with $R\coloneqq\sup_{x\in D}\sqrt{K(x,x)}$ and let $n\in\N$ be such that $c_n\coloneqq c_n(B_{\mathcal H_K})_\infty>0$.
Put 
\begin{equation}
    m\coloneqq \lceil 4n\log_2(2+R/c_n) \rceil.
\end{equation} 
Then, it holds
\begin{equation}
g_{m}^{\lin}(B_{\mathcal H_K})_\infty  \le 16 c_n(B_{\mathcal H_K})_\infty.
\end{equation}
\end{theorem}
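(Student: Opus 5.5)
We reduce everything to the pure greedy algorithm on the kernel dictionary $\mathcal K$ in the Hilbert space $\mathcal H_K$. By Propositions~\ref{prop:gm} and~\ref{prop:cn},
\[
g_m^{\lin}(B_{\mathcal H_K})_\infty=\tau_m(\mathcal K)_{\mathcal H_K}
\quad\text{and}\quad
c_n=d_n(\mathcal K)_{\mathcal H_K}.
\]
So it suffices to find $m$ points $P$ with $\sup_x \dist(K(\cdot,x),V_P)\le 16 d_n$. We run the weak greedy algorithm with parameter $\gamma=1/2$. At each step we pick $x_{k+1}$ with $\dist(K(\cdot,x_{k+1}),V_k)\ge \tfrac12\sigma_k$, where $\sigma_k:=\sup_x\dist(K(\cdot,x),V_k)$. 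Such a point exists whenever $\sigma_k>0$. Then $\sigma_k\ge \tau_k(\mathcal K)$ and $\sigma_0=R$.

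The key input is the DeVore--Petrova--Wojtaszczyk block estimate from \cite{DPW13}, used in its local form. For all $N\ge0$, $K\ge1$ and $1\le n<K$,
\[
\prod_{i=1}^{K}\sigma_{N+i}^2\le \gamma^{-2K}\Bigl(\tfrac{K}{n}\Bigr)^{n}\Bigl(\tfrac{K}{K-n}\Bigr)^{K-n}\sigma_{N+1}^{2n}\,d_n^{2K-2n}.
\]
This is the Gram--Schmidt/lower-triangular matrix argument applied to the block of greedy steps $N+1,\dots,N+K$. I will either quote it, or rederive it as the greedy analogue of what is presumably used for Theorem~\ref{thm:rates}. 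Take $K=2n$ and $\gamma=1/2$. Since $\sigma$ is non-increasing, the left side is at least $\sigma_{N+2n}^{4n}$. Taking $4n$-th roots gives
\[
\sigma_{N+2n}\le 2\cdot 2^{1/2}\,\sigma_{N+1}^{1/2}\,d_n^{1/2}\le c\sqrt{\sigma_N d_n}
\]
with $c=2\sqrt2$. This is exactly the step behind Theorem~\ref{thm:geom}, here applied to a restarted block.

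Next iterate this square-root contraction. Set $a_j:=\sigma_{2nj}/d_n$. Then $a_0=R/d_n$ and $a_{j+1}\le c\sqrt{a_j}$, so $a_j\le c^{2}(a_0/c^2)^{2^{-j}}$, approaching the fixed point $c^2=8$. After $J$ blocks with $2^{-J}\log_2(a_0)\le 1$, i.e.\ $J\approx \log_2\log_2(R/d_n)$, we get $a_J\le 2c^2=16$. That would need only $m\approx 2n\log_2\log_2(R/c_n)$ points, which is stronger than claimed. I therefore suspect the honest block inequality is weaker than the one I wrote: in \cite{DPW13} the factor $\sigma_{N+1}^{2n}$ typically appears with the small parameter entering differently. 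The safer route, and presumably the one intended given the $\log_2(2+R/c_n)$ budget, is a halving argument. Choose $K=4n$ and use the inequality with a general $n<K$ to show that each block of $4n$ steps halves $\sigma$ as long as $\sigma_N\ge 16 d_n$. Concretely, from the product bound with $K=4n$,
\[
\sigma_{N+4n}^{8n}\le 2^{8n}\,4^{n}(4/3)^{3n}\,\sigma_N^{2n}\,d_n^{6n},
\]
so $\sigma_{N+4n}\le C\,\sigma_N^{1/4}d_n^{3/4}$, which is $\le \sigma_N/2$ once $\sigma_N/d_n$ exceeds a fixed threshold. Starting from $\sigma_0=R$, we need at most $\log_2(R/d_n)+1\le\log_2(2+R/c_n)$ halvings to fall below $16d_n$. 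The total number of points is then $4n\lceil\log_2(2+R/c_n)\rceil$, matching $m$ up to the ceiling bookkeeping. The ceiling can be absorbed by noting that the last partial block is unnecessary once $\sigma\le16 d_n$, since $\sigma_k$ is non-increasing.

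The main obstacle is establishing the block (restarted) version of the DPW inequality with constants clean enough to make the threshold exactly $16$. This requires controlling the lower-triangular Gram matrix of the greedy block relative to the previously selected span, i.e.\ projecting onto $V_N^\perp$ and noting that $d_n$ of the projected dictionary is still at most $d_n$. I will first try the orthogonal-projection restart, which makes the block start from scratch with $\sigma_0$ replaced by $\sigma_N$, and then tune $K/n=4$ and $\gamma=1/2$ to verify the numeric threshold $16$.
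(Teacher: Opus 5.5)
Your final halving argument is correct in substance but takes a different route from the paper. More importantly, the route you abandoned is valid and gives a stronger result.

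\textbf{Your first route is correct.} The block inequality you wrote, with an arbitrary starting index $N$, is exactly Theorem~3.2 of \cite{DPW13}; you did not misremember it. Your proof sketch is also the right one: project the block $f_N,\dots,f_{N+K-1}$ onto $V_N^\perp$. The rows of the lower-triangular Gram--Schmidt block then have norm at most $\sigma_N$ and lie within $d_m+\varepsilon$ of the projection of a near-optimal $m$-dimensional subspace. Combine $|\det|\ge\gamma^K\prod_{i=0}^{K-1}\sigma_{N+i}$ with Hadamard's inequality and AM--GM, in an orthonormal basis adapted to that projected subspace. Compactness is not needed.

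Hence $\sigma_{N+2n}\le 2\sqrt2\,\sqrt{\sigma_N d_n}$ for every $N\ge0$, and your iteration $a_{j+1}\le\sqrt{8a_j}$ is legitimate. Take $J=\lceil\log_2\log_2(R/(8c_n))\rceil$ when $R>16c_n$ and $J=0$ otherwise. Then $\sigma_{2nJ}\le 16c_n$, and $2nJ\le 2n(\log_2(R/c_n)-2)\le m$ in the first case. Since $g_m^{\lin}$ is non-increasing in $m$, this already proves the theorem, with only a doubly logarithmic budget.

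Getting more than the statement claims is no reason for suspicion. The paper uses only the special case \eqref{eq:DPW37}, i.e.\ blocks of length $2s$ starting at index $2s$, with $s=2^kn$. Its square-root contraction is the same as yours, but the block length doubles at every step even though only $d_n$ enters. That is why the paper ends at $2^{k+1}n\approx 4n\log_2(R/c_n)$. Note also that your fallback uses the very same inequality with $K=4n$, so if your doubt had been justified it would have invalidated the fallback as well.

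\textbf{The halving route works, but your count is wrong.} With $K=4n$, $m=n$, $\gamma=\tfrac12$ you get $\sigma_{N+4n}\le C\sigma_N^{1/4}d_n^{3/4}$ with $C=2\cdot 4^{1/8}(4/3)^{3/8}\approx 2.65$. This is at most $\sigma_N/2$ once $\sigma_N\ge (2C)^{4/3}d_n\approx 9.3\,d_n$, in particular whenever $\sigma_N\ge 16d_n$. So fixed blocks with linear contraction reach the same order $4n\log_2(R/c_n)$ as the paper.

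As written, however, the count fails: $\log_2(R/d_n)+1\le\log_2(2+R/c_n)$ is false as soon as $R/c_n>2$, and $4n\lceil\log_2(2+R/c_n)\rceil$ may exceed $m$. The correct count is $J'=\max\{0,\lceil\log_2(R/(16c_n))\rceil\}$ blocks. Either some $\sigma_{4nj}$ with $j\le J'$ is already below $16d_n$, or all halvings apply and $\sigma_{4nJ'}\le R2^{-J'}\le 16d_n$. Since $J'\le\max\{0,\log_2(R/c_n)-3\}<\log_2(2+R/c_n)$, you get $4nJ'\le m$ with room to spare, and no ceiling issue arises.

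\textbf{Comparison.} The paper's proof needs only the two special estimates of Proposition~\ref{prop:dpw}. Both of your routes need the general restarted inequality from \cite{DPW13}. Once you have it, your first computation gives the markedly better sampling budget.
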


Theorem~\ref{thm:rates} states that under additional (polynomial) decay assumptions the decay rate of the Gelfand widths directly transfers to the sampling widths. On the other hand, Theorem~\ref{thm:log} gives a constructive direct comparison under decay-dependent oversampling.
If $c_n\asymp n^{-\alpha}$, the latter uses $m\asymp n\log n$, while Corollary~\ref{cor:rates} avoids oversampling at the price of a rate-dependent constant.
If $c_n\asymp\exp(-\theta n^\alpha)$, it uses $m\asymp n^{1+\alpha}$, and Theorem~\ref{thm:geom} provides a constructive alternative.

Finally, we reverse the transference principle.
Our subset width $\tau_m(\mathcal A)_X$ is denoted by $\bar d_m$ in \cite{Wojtaszczyk15} and called the greedy Kolmogorov width in \cite[Sec.~1.3]{DPSW26}.
At the same index, \cite[Thm.~3.1]{Wojtaszczyk15} gives
$\tau_m(\mathcal A)_X\le(m+1)d_m(\mathcal A)_X$ for arbitrary compact subsets of Banach spaces and shows that the order $m$ is necessary; the Hilbert-space case goes back to \cite[Thm.~4.1]{BCDDPW11}. 
Unlike the weak-greedy error $\sigma_m$, the quantity $\tau_m$ minimizes over all subsets of at most $m$ elements and satisfies $\tau_m\le\sigma_m$.
Hence, a bound for $\tau_m$ does not imply the analogous estimate for weak $P$-greedy sampling.
Earlier comparisons for $\sigma_m$ carried a factor $m2^m$ \cite{BuMaPaPrTu12,DPW13}, later improved to the sharp factor $2^m$ in \cite[(4.4)]{BCDDPW11}.
Reversing the dictionary argument and applying \eqref{eq:KPUU-sqrtn} gives the following $\sqrt m$ bound under oversampling, which is optimal for general normed spaces even under constant oversampling.

\begin{theorem}[Optimal reduced basis selection]\label{cor:best-subset}\label{thm:basis_sel}
Let $\mathcal A$ be a compact subset of a normed space $X$ and define
\begin{equation}\label{eq:tau_m}
    \tau_m(\mathcal A)_X\coloneqq \inf_{a_1,\ldots,a_m\in\mathcal A} \sup_{a\in\mathcal A} \dist_X\bigl(a,\spann\{a_1,\ldots,a_m\}\bigr).
\end{equation}
Then, with the absolute constant $C$ from \eqref{eq:KPUU-sqrtn}, it holds that
\begin{equation}\label{eq:best-subset}
\tau_{2m}(\mathcal A)_X\le C\sqrt m\,d_m(\mathcal A)_X,
\qquad m\in\N.
\end{equation}
\end{theorem}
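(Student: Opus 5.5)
Our plan is to reverse the dictionary argument: we encode $\mathcal A$ as a function class on a suitable domain, so that subset selection becomes sampling and \eqref{eq:KPUU-sqrtn} can be applied.

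\emph{Step 1: the encoding.} Put $D\coloneqq B_{X^*}$, the closed unit ball of the dual space. To each $x\in X$ associate the function $\hat x\colon D\to\C$, $\hat x(\lambda)\coloneqq\lambda(x)$. By Hahn--Banach, $\norm{\hat x}_\infty=\norm{x}_X$, so $x\mapsto\hat x$ is a linear isometry of $X$ into $B(D)$. Let $F\coloneqq\{\hat x\colon x\in X\}$ with the sup norm. This is a normed function space embedding continuously into $B(D)$. One caveat: \eqref{eq:KPUU-sqrtn} is stated for unit balls. We would therefore take the class $\mathcal F$ to be the unit ball of the normed space $F$, and then transfer back only information about $\mathcal A$. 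A more natural alternative is to use the weaker, non-unit-ball form of the argument in \cite{KPUU24}, which works for arbitrary bounded classes $\mathcal F\subset B(D)$.

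\emph{Step 2: the key identity.} The structural fact we need is the following. For points $\lambda_1,\dots,\lambda_k\in D$, the samples $\hat a(\lambda_j)=\lambda_j(a)$ are functionals, not elements of $\mathcal A$. So the roles must be swapped: the domain $D$ should be $\mathcal A$ itself and the functions should come from the dual. Concretely, take $D\coloneqq\mathcal A$ and the class $\mathcal F\coloneqq\{\lambda|_{\mathcal A}\colon\lambda\in B_{X^*}\}$. A linear sampling algorithm using samples at $a_1,\dots,a_{2m}\in\mathcal A$ has the form $\lambda\mapsto\sum_j\lambda(a_j)\varphi_j$ with $\varphi_j\in B(\mathcal A)$. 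Its worst-case error is
\[
\sup_{a\in\mathcal A}\sup_{\lambda\in B_{X^*}}\Bigl|\lambda\Bigl(a-\sum_j\varphi_j(a)a_j\Bigr)\Bigr|
=\sup_{a\in\mathcal A}\Bigl\|a-\sum_j\varphi_j(a)a_j\Bigr\|_X
\ge\sup_{a\in\mathcal A}\dist_X\bigl(a,\spann\{a_j\}\bigr).
\]
Hence $\tau_{2m}(\mathcal A)_X\le g^{\lin}_{2m}(\mathcal F)_\infty$. Conversely, an $m$-dimensional $V=\spann\{v_i\}$ with $\sup_a\dist(a,V)\le\varepsilon$ gives $d_m(\mathcal F)_\infty\le\varepsilon$ (up to arbitrarily small slack). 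Indeed, choose near-best approximations $a\approx\sum_i c_i(a)v_i$. Then the functions $a\mapsto c_i(a)$ are independent of $\lambda$, and $\lambda|_{\mathcal A}$ is approximated by $\sum_i\lambda(v_i)c_i(\cdot)$, an element of an $m$-dimensional subspace of $B(\mathcal A)$, with uniform error at most $\varepsilon$. Thus $d_m(\mathcal F)_\infty\le d_m(\mathcal A)_X$.

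\emph{Step 3: applying \eqref{eq:KPUU-sqrtn}.} We realize $\mathcal F$ as the unit ball of a normed space $F$. Take $F$ to be the space of restrictions $\lambda|_{\mathcal A}$ for $\lambda\in X^*$, with norm $\norm{\lambda|_{\mathcal A}}_F\coloneqq\inf\{\norm{\mu}_{X^*}\colon\mu|_{\mathcal A}=\lambda|_{\mathcal A}\}$, a quotient norm. Its unit ball is contained in the closure of $\mathcal F$, and the estimates of Step 2 are unaffected by taking this closure. This space embeds continuously into $B(\mathcal A)$, since $\mathcal A$ is bounded by compactness. Chaining the three estimates gives
\[
\tau_{2m}(\mathcal A)_X\le g^{\lin}_{2m}(\mathcal F)_\infty\le C\sqrt m\,d_m(\mathcal F)_\infty\le C\sqrt m\,d_m(\mathcal A)_X .
\]

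\emph{Main obstacle.} The main obstacle is Step 2: one must choose the orientation correctly, so that sampling points are elements of $\mathcal A$. One must also verify that the near-best coefficient maps $c_i$ need no regularity, which holds since only $B(\mathcal A)$ is required. Compactness is used only to ensure that $\mathcal A$ is bounded, and that $d_m(\mathcal A)_X$ is finite and approximately attained.
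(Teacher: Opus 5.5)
Your proof is correct and is essentially the paper's argument. Your space of restrictions $\lambda|_{\mathcal A}$ with the quotient norm is isometric to $E^*$ for $E=\overline{\spann\mathcal A}$. By Hahn--Banach its unit ball is exactly $\{\lambda|_{\mathcal A}:\lambda\in B_{X^*}\}$, so your closure step is not even needed. Your two estimates, $\tau_{2m}(\mathcal A)_X\le g^{\lin}_{2m}(\mathcal F)_\infty$ and $d_m(\mathcal F)_{B(\mathcal A)}\le d_m(\mathcal A)_X$ via near-best coefficient maps, match the paper's. One point should be stated explicitly rather than dismissed as ``no regularity needed'': the coefficient maps $c_i$ must be bounded on $\mathcal A$ so that their span lies in $B(\mathcal A)$. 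This follows because $\bigl\|\sum_i c_i(a)v_i\bigr\|_X\le 2\sup_{b\in\mathcal A}\norm{b}_X+\varepsilon$ and coordinate functionals on the finite-dimensional space $V$ are bounded.
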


\section{The kernel-translates dictionary}\label{sec:dict}
 
Recall from Section~\ref{sec:intro} that $\mathcal H_K$ is an RKHS on $D$ with bounded kernel $K$, i.e.\ $R<\infty$ in \eqref{eq:R}.
Let
\begin{equation}
    \mathcal{K} \coloneqq \{a_x : x\in D\}\subset \mathcal H_K, \qquad a_x\coloneqq K(\cdot,x), \qquad \norm{a_x}_{\mathcal H_K}=\sqrt{K(x,x)}\le R.
\end{equation}
For a finite set $P\subset D$, let $V_P\coloneqq\spann\{a_x : x\in P\}$ and define the \emph{power function}
\begin{equation}
    \mathrm{Pow}_P(x) \coloneqq \dist_{\mathcal H_K}\bigl(a_x, V_P\bigr) = \|(I-\Pi_{V_P})a_x\|_{\mathcal H_K},
\end{equation}
where $\Pi_{V_P}$ denotes the $\mathcal H_K$-orthogonal projection onto $V_P$.
This leads to
\begin{equation}\label{power_f}
    \tau_m(\mathcal{K})_{\mathcal{H}_K} = \inf_{|P|\le m} \sup_{x\in D} \mathrm{Pow}_P(x) = \inf_{|P|\le m} \|\mathrm{Pow}_P\|_\infty
\end{equation}
and $\tau_0(\mathcal{K})=d_0(\mathcal{K})_{\mathcal H_K}=\sup_{x\in D}\norm{a_x}_{\mathcal H_K}=R$.
The following observation identifies the orthogonal projection with a sampling algorithm.

\begin{lemma}[Kernel interpolation is a sampling algorithm]\label{lem:interp}
Let $P=\{x_1,\dots,x_k\}\subset D$ be finite, let $\mathbf K[P]\coloneqq(K(x_i,x_j))_{i,j\le k}$ be the associated Gram matrix with Moore--Penrose inverse $\mathbf K[P]^{+}$, and let $\mathbf f[P]\coloneqq(f(x_j))_{j\le k}$ collect the samples of $f\in\mathcal H_K$.
Then, the $\mathcal H_K$-orthogonal projection onto $V_P \coloneqq \spann\{K(\cdot,x_1),\ldots, K(\cdot,x_k)\}$ is given by
\begin{equation}\label{eq:interp}
    \Pi_{V_P}f=\sum_{i\le k}\bigl(\mathbf K[P]^{+}\mathbf f[P]\bigr)_i K(\cdot,x_i), \qquad f\in\mathcal H_K .
\end{equation}
Thus, $\Pi_{V_P}f$ is linear in the samples, interpolates them, and has minimal $\mathcal H_K$-norm among all functions with these values.
\end{lemma}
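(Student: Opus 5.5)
Since $P=\{x_1,\dots,x_k\}$ is finite, $V_P$ is finite-dimensional and hence closed, so $\Pi_{V_P}$ is well defined. I will compute it from the normal equations and rewrite them with the reproducing property. Write $\Pi_{V_P}f=\sum_{i\le k} c_i K(\cdot,x_i)$. This characterises the projection exactly when $f-\Pi_{V_P}f\perp K(\cdot,x_j)$ for all $j\le k$. By the reproducing property $\ip{g}{K(\cdot,x_j)}_{\mathcal H_K}=g(x_j)$, the orthogonality conditions read
\[
f(x_j)=\sum_{i\le k} c_i K(x_j,x_i), \qquad j\le k .
\]
In matrix form this is $\mathbf K[P]\mathbf c=\mathbf f[P]$, so the projection is $\sum_i c_iK(\cdot,x_i)$ for any solution $\mathbf c$ of this system. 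This already shows interpolation: the projection takes the value $f(x_j)$ at each $x_j$.

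The step needing care is that $\mathbf K[P]$ may be singular, for instance when two kernel translates coincide or are linearly dependent. I must show that $\mathbf c\coloneqq\mathbf K[P]^{+}\mathbf f[P]$ solves the system. Since $\mathbf K[P]$ is Hermitian positive semidefinite, $\mathbf K[P]\mathbf K[P]^{+}$ is the orthogonal projection onto $\operatorname{ran}\mathbf K[P]=(\ker\mathbf K[P])^\perp$. So it suffices to show $\mathbf f[P]\perp\ker\mathbf K[P]$. If $\mathbf K[P]\mathbf v=0$, then
\[
\Bigl\|\sum_i \overline{v_i}K(\cdot,x_i)\Bigr\|^2_{\mathcal H_K}=\mathbf v^*\mathbf K[P]\mathbf v=0
\]
up to the conjugation convention, so $\sum_i\overline{v_i}K(\cdot,x_i)=0$ as an element of $\mathcal H_K$. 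Pairing with $f$ then gives $\sum_i v_i f(x_i)=0$, i.e.\ $\mathbf f[P]\perp\mathbf v$ in the appropriate inner product. I will check the index and conjugation conventions carefully here, using $K(x,y)=\overline{K(y,x)}$. Moreover, any two solutions of the system differ by an element of $\ker\mathbf K[P]$, and the same norm computation shows such a difference produces the zero function. Hence the representation is well defined and equals $\Pi_{V_P}f$.

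Linearity in the samples is immediate, since $\mathbf f[P]\mapsto\mathbf K[P]^{+}\mathbf f[P]$ is a fixed matrix. It remains to prove minimal norm. Let $g\in\mathcal H_K$ with $g(x_j)=f(x_j)$ for all $j$. Then $\ip{g-\Pi_{V_P}f}{K(\cdot,x_j)}_{\mathcal H_K}=0$ for each $j$, so $g-\Pi_{V_P}f\perp V_P$. Pythagoras then gives
\[
\norm{g}^2_{\mathcal H_K}=\norm{\Pi_{V_P}f}^2_{\mathcal H_K}+\norm{g-\Pi_{V_P}f}^2_{\mathcal H_K}\ge\norm{\Pi_{V_P}f}^2_{\mathcal H_K}.
\]
The main obstacle is the singular Gram matrix case handled above; the rest is routine.
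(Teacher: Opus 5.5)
Your proposal is correct and follows the same route as the paper: you rewrite the normal equations via the reproducing property as $\mathbf K[P]\mathbf c=\mathbf f[P]$, note that $\mathbf K[P]^{+}\mathbf f[P]$ solves this consistent system, and obtain minimality by Pythagoras. Your explicit consistency check goes beyond the paper, which only asserts consistency; this already follows from the coefficients of $\Pi_{V_P}f\in V_P$ solving the system. When you fix the conventions, drop the conjugates and use $\sum_i v_iK(\cdot,x_i)$, whose squared norm is $\mathbf v^*\mathbf K[P]\mathbf v$; pairing it with $f$ then gives $\mathbf v^*\mathbf f[P]=0$.
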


\begin{proof}
We write $\Pi_{V_P}f=\sum_{i\le k}\alpha_i a_{x_i}$ with coefficient vector $\boldsymbol{\alpha}=(\alpha_i)_{i\le k}$.
By the reproducing property, the normal equations $\ip{f-\Pi_{V_P}f}{a_{x_j}}_{\mathcal H_K}=0$ read $\mathbf K[P]\boldsymbol{\alpha}=\mathbf f[P]$.
This consistent system has the solution $\boldsymbol{\alpha}=\mathbf K[P]^{+}\mathbf f[P]$, which proves \eqref{eq:interp}, linearity in the samples, and interpolation.
Finally, if $g\in\mathcal H_K$ satisfies $g(x_j)=f(x_j)$ for all $j\le k$, then $g-\Pi_{V_P}f\perp V_P$ and hence $\norm{g}_{\mathcal H_K}^2=\norm{\Pi_{V_P}f}_{\mathcal H_K}^2+\norm{g-\Pi_{V_P}f}_{\mathcal H_K}^2\ge\norm{\Pi_{V_P}f}_{\mathcal H_K}^2$.
\end{proof}
 
\begin{proposition}\label{prop:gm}
For every $m\in\N_0$, it holds that
\begin{equation}\label{eq:chain_prop_gm}
    g_m(B_{\mathcal H_K})_\infty = g_m^{\lin}(B_{\mathcal H_K})_\infty = \tau_m(\mathcal{K})_{\mathcal{H}_K}.
\end{equation}
\end{proposition}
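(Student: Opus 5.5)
I will prove the chain $\tau_m(\mathcal K)_{\mathcal H_K}\ge g_m^{\lin}(B_{\mathcal H_K})_\infty\ge g_m(B_{\mathcal H_K})_\infty\ge\tau_m(\mathcal K)_{\mathcal H_K}$. The middle inequality is trivial, since linear recovery maps are admissible in the definition of $g_m$.

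\emph{Upper bound} $g_m^{\lin}\le\tau_m$. Fix $P=\{x_1,\dots,x_k\}$ with $k\le m$ and use the linear sampling algorithm $f\mapsto\Pi_{V_P}f$ from Lemma~\ref{lem:interp}. If $|P|<m$, pad with repeated points or zero weights. For $f\in B_{\mathcal H_K}$ and $x\in D$, the reproducing property and self-adjointness of the projection give
\[
|f(x)-(\Pi_{V_P}f)(x)|=|\ip{(I-\Pi_{V_P})f}{a_x}_{\mathcal H_K}|=|\ip{f}{(I-\Pi_{V_P})a_x}_{\mathcal H_K}|\le \mathrm{Pow}_P(x).
\]
Taking the supremum over $x$ and $f$, and then the infimum over $P$, yields the claim by \eqref{power_f}.

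\emph{Lower bound} $g_m\ge\tau_m$. Fix points $x_1,\dots,x_m$, possibly with repetitions, and let $P$ be the set of distinct points, so $|P|\le m$. Take an arbitrary map $\varphi\colon\C^m\to B(D)$. For any $x\in D$ consider the fooling function
\[
h\coloneqq \frac{(I-\Pi_{V_P})a_x}{\mathrm{Pow}_P(x)},
\]
assuming $\mathrm{Pow}_P(x)>0$ (otherwise there is nothing to show at this $x$). Then $h\in B_{\mathcal H_K}$, and $h(x_j)=\ip{h}{a_{x_j}}=0$ for all $j$ because $h\perp V_P$. Moreover $h(x)=\ip{h}{a_x}=\norm{(I-\Pi_{V_P})a_x}^2/\mathrm{Pow}_P(x)=\mathrm{Pow}_P(x)$.

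Both $h$ and $-h$ lie in $B_{\mathcal H_K}$ and have identical (zero) data, so the algorithm outputs the same function $g=\varphi(0)$ for both. Hence
\[
\max\{\norm{h-g}_\infty,\norm{-h-g}_\infty\}\ge |h(x)|=\mathrm{Pow}_P(x).
\]
Taking the supremum over $x$ gives a worst-case error of at least $\norm{\mathrm{Pow}_P}_\infty\ge\tau_m(\mathcal K)_{\mathcal H_K}$. Taking the infimum over points and $\varphi$ finishes the argument.

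The case $m=0$ fits both directions with $V_\emptyset=\{0\}$, where $\mathrm{Pow}_\emptyset(x)=\sqrt{K(x,x)}$ and the bound equals $R$.

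\emph{Main obstacle.} The only delicate point is the lower bound for arbitrary nonlinear $\varphi$. The symmetric fooling pair $\pm h$ with vanishing samples handles this without any measurability or continuity assumptions on $\varphi$. Repeated sample points are handled by passing to the set of distinct points.
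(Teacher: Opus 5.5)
Your proposal is correct and follows essentially the same route as the paper: kernel interpolation $f\mapsto\Pi_{V_P}f$ for the upper bound, and the symmetric fooling pair $\pm(I-\Pi_{V_P})a_x/\mathrm{Pow}_P(x)$ with vanishing samples for the lower bound against arbitrary nonlinear $\varphi$. The differences are only cosmetic. You bound the error at every $x$ and then take the supremum, whereas the paper picks a $\delta$-near-maximizer $x^*$. You also move the projection onto $a_x$ via self-adjointness, whereas the paper uses $\norm{f-\Pi_{V_P}f}_{\mathcal H_K}\le\norm{f}_{\mathcal H_K}$.
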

 
\begin{proof}
Since linear algorithms are a subclass of all algorithms, we have
\begin{equation}\label{eq:gm-triv}
    g_m(B_{\mathcal H_K})_\infty \le g_m^{\lin}(B_{\mathcal H_K})_\infty.
\end{equation}
Below, we prove $g_m^{\lin}(B_{\mathcal H_K})_\infty\le\tau_m(\mathcal{K})_{\mathcal{H}_K}$ and $g_m(B_{\mathcal H_K})_\infty\ge\tau_m(\mathcal{K})_{\mathcal{H}_K}$.
Together with \eqref{eq:gm-triv}, this implies
\begin{equation}
    \tau_m(\mathcal{K})_{\mathcal{H}_K} \le g_m(B_{\mathcal H_K})_\infty \le g_m^{\lin}(B_{\mathcal H_K})_\infty \le \tau_m(\mathcal{K})_{\mathcal{H}_K},
\end{equation}
so that both equalities in \eqref{eq:chain_prop_gm} hold.
 
First, we establish the upper bound for $g_m^{\lin}$.
Fix a set $P=\{x_1,\dots,x_k\}\subset D$ with $k=|P|\le m$.
This is admissible in \eqref{eq:glin}, since repeating a node changes neither $V_P$ nor the resulting algorithm, and by Lemma~\ref{lem:interp} kernel interpolation $f\mapsto\Pi_{V_P}f$ is a linear algorithm using only the samples $\mathbf f[P]$.
Since $f-\Pi_{V_P}f\perp V_P$, we get for $f\in B_{\mathcal H_K}$ and $x\in D$ that
\begin{align}
    |f(x)-\Pi_{V_P}f(x)| &= \bigl|\ip{f-\Pi_{V_P}f}{a_x}_{\mathcal H_K}\bigr|
    = \bigl|\ip{f-\Pi_{V_P}f}{(I-\Pi_{V_P})a_x}_{\mathcal H_K}\bigr| \notag\\
    & \le \norm{f-\Pi_{V_P}f}_{\mathcal H_K} \norm{ (I-\Pi_{V_P})a_x  }_{\mathcal H_K}\le \norm{f}_{\mathcal H_K} \mathrm{Pow}_P(x) \le \mathrm{Pow}_P(x).
\end{align}
Taking the supremum over $x\in D$ and over $f\in B_{\mathcal H_K}$, it holds for every $P$ with $|P|\le m$ that
\begin{equation}
    g_m^{\lin}(B_{\mathcal H_K})_\infty \le
    \sup_{f\in B_{\mathcal H_K}}\norm{f-\Pi_{V_P}f}_\infty \le \norm{\mathrm{Pow}_P}_\infty.
\end{equation}
 
Now, we show the lower bound for $g_m$.
To this end, we use a fooling argument, constructing two admissible functions that produce the same data but are far apart, so that no algorithm can reconstruct both equally well.
Let an arbitrary algorithm based on the samples $f(x_1),\dots,f(x_k)$ at locations $P=\{x_1,\dots,x_k\}$ with $k=|P|\le m$ be given, and let $A\in B(D)$ be its output on the data $(0,\dots,0)$.
We fix $0<\delta<\norm{\mathrm{Pow}_P}_\infty$ and pick $x^*$ with $\mathrm{Pow}_P(x^*)\ge \norm{\mathrm{Pow}_P}_\infty-\delta>0$.
If $\norm{\mathrm{Pow}_P}_\infty=0$ there is nothing to prove.
The function
\begin{equation}
    h \coloneqq \frac{(I-\Pi_{V_P})a_{x^*}}{\mathrm{Pow}_P(x^*)}
\end{equation}
satisfies $\norm{h}_{\mathcal H_K}=1$, $h(x_i)=\ip{h}{a_{x_i}}_{\mathcal H_K}=0$ for all $i\le k$, and $h(x^*)=\ip{h}{a_{x^*}}_{\mathcal H_K}=\mathrm{Pow}_P(x^*)$.
Thus, both $h$ and $-h$ belong to $B_{\mathcal H_K}$ and produce the same data $h(x_i)=-h(x_i)=0$ for $x_i\in P$, so that the
algorithm returns the same $A$ in either case. By the triangle inequality, we have that
\begin{equation}
    \max\bigl\{\norm{h-A}_\infty, \norm{-h-A}_\infty\bigr\} \ge \norm{h}_\infty \ge |h(x^*)| = \mathrm{Pow}_P(x^*)\ge\norm{\mathrm{Pow}_P}_\infty-\delta,
\end{equation}
so that the worst-case error of the recovery algorithm is at least $\norm{\mathrm{Pow}_P}_\infty-\delta$.
Letting $\delta\to0$ and taking the infimum over $P$ gives $g_m(B_{\mathcal H_K})_\infty\ge\tau_m(\mathcal{K})_{\mathcal{H}_K}$.
\end{proof}

\begin{proposition}\label{prop:cn}
For every $n\in\N_0$, it holds that $c_n(B_{\mathcal H_K})_\infty  = d_n(\mathcal{K})_{\mathcal H_K}$.
\end{proposition}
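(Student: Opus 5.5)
We prove the two inequalities $c_n(B_{\mathcal H_K})_\infty\le d_n(\mathcal K)_{\mathcal H_K}$ and $c_n(B_{\mathcal H_K})_\infty\ge d_n(\mathcal K)_{\mathcal H_K}$ separately. In both directions, the reproducing property $f(x)=\ip{f}{a_x}_{\mathcal H_K}$ turns the uniform norm into a supremum over the dictionary $\mathcal K$, and Hilbert space duality exchanges information functionals with subspaces.

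\emph{Upper bound.} Fix a subspace $V\subset\mathcal H_K$ with $\dim V\le n$ and an orthonormal basis $v_1,\dots,v_k$, $k\le n$. As information we take $Nf\coloneqq(\ip{f}{v_i}_{\mathcal H_K})_{i\le k}$, padded with zeros to length $n$. This is bounded and linear. As recovery map we take $\psi(Nf)\coloneqq\Pi_V f$, which is a function in $\mathcal H_K\subset B(D)$. Since $I-\Pi_V$ is self-adjoint, for $f\in B_{\mathcal H_K}$ we get
\begin{equation*}
|f(x)-\Pi_Vf(x)|=\bigl|\ip{(I-\Pi_V)f}{a_x}_{\mathcal H_K}\bigr|=\bigl|\ip{f}{(I-\Pi_V)a_x}_{\mathcal H_K}\bigr|\le\dist_{\mathcal H_K}(a_x,V).
\end{equation*}
Taking suprema over $x$ and $f$, and then the infimum over $V$, gives $c_n\le d_n(\mathcal K)_{\mathcal H_K}$.

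\emph{Lower bound.} This uses the same fooling argument as in Proposition~\ref{prop:gm}. Let $N\colon\mathcal H_K\to\C^n$ be bounded and linear, and let $\psi$ be arbitrary. By Riesz, $N f=(\ip{f}{w_i}_{\mathcal H_K})_{i\le n}$ for some $w_i\in\mathcal H_K$. Put $W\coloneqq\spann\{w_1,\dots,w_n\}$, so $\dim W\le n$ and $\ker N=W^\perp$. For any $x$ with $\dist(a_x,W)>0$, set
\begin{equation*}
h\coloneqq\frac{(I-\Pi_W)a_x}{\dist(a_x,W)}.
\end{equation*}
Then $h\in B_{\mathcal H_K}$, $Nh=N(-h)=0$, and $h(x)=\ip{h}{a_x}_{\mathcal H_K}=\dist(a_x,W)$. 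Hence one of $\pm h$ incurs error at least $\norm{h}_\infty\ge\dist(a_x,W)$. Taking the supremum over $x$ shows that the worst-case error of $(N,\psi)$ is at least $\sup_x\dist(a_x,W)\ge d_n(\mathcal K)_{\mathcal H_K}$. Taking the infimum over $(N,\psi)$ gives $c_n\ge d_n(\mathcal K)_{\mathcal H_K}$.

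There is no real obstacle. The two points needing care are the degenerate cases and the Riesz step. For degenerate cases: if $\dist(a_x,W)=0$ there is nothing to show, and $n=0$ is covered by the convention $c_0=R=d_0$. For the Riesz step: we need the functionals to lie in $\mathcal H_K$ itself, so that $W\subset\mathcal H_K$ is admissible in the definition of $d_n(\mathcal K)_{\mathcal H_K}$.
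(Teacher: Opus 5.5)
Your proof is correct. The upper bound is the same as the paper's: orthogonal projection onto $V$, using $\ip{f}{v_i}_{\mathcal H_K}$ as information. The lower bound takes a genuinely different route.

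The paper does not handle arbitrary recovery maps $\psi$ directly. It invokes the classical identity $c_n(B_{\mathcal H_K})_\infty=a_n(B_{\mathcal H_K})_\infty$ with the linear widths $a_n$, which it cites from the literature (optimality of linear, spline-type algorithms for Hilbert-space classes). It then only has to treat linear operators of rank at most $n$. Writing $T=\sum_{j\le n}\ip{\cdot}{v_j}_{\mathcal H_K}g_j$, it computes the exact pointwise worst-case error
\[
\sup_{f\in B_{\mathcal H_K}}|(f-Tf)(x)|=\norm{a_x-\sum_{j\le n}\overline{g_j(x)}\,v_j}_{\mathcal H_K},
\]
which is at least $\dist_{\mathcal H_K}(a_x,W)$ with $W=\spann\{v_1,\dots,v_n\}$.

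You bypass that reduction. You take the Riesz representers $w_i$ of the information map, so that $\ker N=W^\perp$. The symmetric pair $\pm h$ with $h\propto(I-\Pi_W)a_x$ then defeats every algorithm, linear or not.

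Your route buys two things:
\begin{itemize}
\item It is self-contained. In fact your two bounds reprove the cited identity $c_n=a_n$ in this setting, since the upper bound is attained by a linear method.
\item It shows that Proposition~\ref{prop:gm} and Proposition~\ref{prop:cn} have literally the same proof. The only change is that $V_P$, spanned by the representers $a_{x_i}$ of point evaluations, is replaced by an arbitrary $W$ of dimension at most $n$. This makes the transference principle especially transparent.
\end{itemize}

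The paper's route gives slightly more information in the linear case: an exact error formula for every rank-$n$ linear method. That formula makes visible that the orthogonal projection is the optimal choice of the coefficient functions $g_j$. It pays for this by relying on the external identity for the passage to nonlinear algorithms.
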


\begin{proof}
We use the relation $c_n(B_{\mathcal H_K})_\infty= a_n(B_{\mathcal H_K})_\infty$ in the case $F = \mathcal{H}_{K}$, where 
\begin{equation}
a_n(B_F)_X \coloneqq \inf_{\substack{T\colon F\to X\\ \rm{rank } T\le n}}\sup_{f\in B_F}\norm{f-Tf}_X,
\end{equation}
and $T$ ranges through all bounded linear operators $T\colon F\to X$ of rank at most $n$, and $X$ is a normed space such that $F\hookrightarrow X$. This identity is well-known, see \cite[Sect.\ 4.2.2]{NW1}.

Let $T$ be a linear operator of rank at most $n$.
We may assume $T = \sum_{j\le n}\ip{\cdot}{v_j}_{\mathcal H_K} g_j$ with $v_j\in \mathcal H_K$ and $g_j\in B(D)$, the functionals $f\mapsto\ip{f}{v_j}_{\mathcal H_K}$ being the Riesz representations of the bounded coefficient functionals of $T$.
Then for $f\in B_{\mathcal H_K}$ and $x\in D$, it holds $(f-Tf)(x) = \langle f, a_x-\sum_{j\le n}\overline{g_j(x)} v_j\rangle_{\mathcal H_K}$, so that
\begin{equation}
    \sup_{f\in B_{\mathcal H_K}}|(f-Tf)(x)| = \Bigl\| a_x-\sum_{j\le n}\overline{g_j(x)} v_j\Bigr\|_{\mathcal H_K}.
\end{equation}
Taking the supremum over $x$ shows that every linear operator $T$ of rank at most $n$ satisfies $\sup_{f\in B_{\mathcal H_K}}\norm{f-Tf}_\infty\ge\sup_{x\in D}\dist_{\mathcal H_K}(a_x,W)$ with $W=\spann\{v_1,\dots,v_n\}$, which in turn implies $a_n(B_{\mathcal H_K})_\infty\ge d_n(\mathcal{K})_{\mathcal H_K}$.

Conversely, fix $W\subset \mathcal H_K$ with $\dim W=l\le n$ and an orthonormal basis $w_1,\dots,w_l$ of $W$.
Then, let $T\coloneqq\sum_{j\le l}\ip{\cdot}{w_j}_{\mathcal H_K} w_j$, so that $Tf$ is the orthogonal projection of $f$ onto $W$ and $T$ has rank at most $n$. 
Its coefficient functionals are $x\mapsto w_j(x)$, which by the reproducing property are bounded by $R$ and hence admissible elements of $B(D)$. 
This gives $a_n(B_{\mathcal H_K})_\infty\le \sup_{x\in D}\dist_{\mathcal H_K}(a_x,W)$ and therefore $a_n(B_{\mathcal H_K})_\infty=d_n(\mathcal{K})_{\mathcal H_K}$.
\end{proof}

Propositions~\ref{prop:gm} and~\ref{prop:cn} form the transference principle. 
Proposition~\ref{prop:gm} is folklore in kernel approximation; see \cite{SH17} and the references therein, while Proposition~\ref{prop:cn} is implicit in optimal-recovery theory \cite{NW3,OP95,KWW08}.
The combination of \eqref{eq:greedy-vs-g} with the greedy estimates of \cite{DPW13} was already used for kernels in \cite{SH17}, but without identifying $d_n(\mathcal K)_{\mathcal H_K}$ with $c_n(B_{\mathcal H_K})_\infty$.

\section{Greedy sampling designs, Gelfand widths and reduced bases}\label{sec:greedy}

All results of this section are obtained from the \emph{weak greedy algorithm} of \cite{BCDDPW11,DPW13}.
Let $\mathcal H$ be a Hilbert space, let $\mathcal{F}\subset B_{\mathcal H}$ be a subset of its unit ball, and fix a parameter $\gamma\in(0,1]$.
Starting from $V_0\coloneqq\{0\}$, the algorithm selects $f_0,f_1,\dots\in\mathcal{F}$ recursively: given $V_j\coloneqq\spann\{f_0,\dots,f_{j-1}\}$, it picks any $f_j\in\mathcal F$ with
\begin{equation}\label{eq:weak-greedy}
    \dist(f_j,V_j)\ \ge\ \gamma \sigma_j,
    \qquad
    \sigma_j\coloneqq\sup_{f\in\mathcal{F}}\dist(f,V_j).
\end{equation}
Since $V_j\subset V_{j+1}$, the sequence $(\sigma_j)_{j\ge0}$ is non-increasing.
In particular, $\sigma_0=\sup_{f\in\mathcal F}\norm{f}_{\mathcal H}\le1$.
For $\gamma<1$, such a $f_j$ exists for every $\mathcal F$, since \eqref{eq:weak-greedy} asks only for a $\gamma$-approximate maximizer of $\dist(\cdot,V_j)$, whereas $\gamma=1$ requires the supremum to be attained; see Remark~\ref{rem:gamma1}.
The $\sigma_j$ are compared with the Kolmogorov widths $d_n(\mathcal F)_{\mathcal H}$ of \eqref{eq:dn-general} by the following two estimates from \cite{DPW13}.

\begin{proposition}[{\cite[Cor.~3.3(i) and eq.~(3.7)]{DPW13}}]\label{prop:dpw}
Let $\mathcal H$ be a Hilbert space, $\mathcal F\subset B_{\mathcal H}$, $\gamma\in(0,1]$, and $(\sigma_j)_{j\ge0}$ be as in \eqref{eq:weak-greedy}.
Then, writing $d_n\coloneqq d_n(\mathcal F)_{\mathcal H}$, it holds for all $n,s\in\N$ that
\begin{align}
    \sigma_{2n} &\le \sqrt2 \gamma^{-1} \sqrt{d_n}, \label{eq:DPWi}\\
    \sigma_{4s} &\le \sqrt2 \gamma^{-1} \sqrt{\sigma_{2s} d_s}. \label{eq:DPW37}
\end{align}
\end{proposition}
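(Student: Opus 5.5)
The plan is to follow the determinant (volume) argument of \cite{DPW13}, which gives both \eqref{eq:DPWi} and \eqref{eq:DPW37} as instances of one block estimate. If some $\sigma_j=0$, then all later $\sigma$ vanish and both claims are trivial, so assume $\sigma_j>0$ throughout. Fix $N$ greedy steps and let $\varphi_0,\varphi_1,\dots$ be the Gram--Schmidt orthonormalization of $f_0,f_1,\dots$, so that $V_j=\spann\{\varphi_0,\dots,\varphi_{j-1}\}$. In this basis the greedy elements have coordinates $a_{ij}\coloneqq\ip{f_i}{\varphi_j}$. The resulting matrix is lower triangular with diagonal $a_{jj}=\dist(f_j,V_j)\ge\gamma\sigma_j$.

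\textbf{Block estimate.} Fix $j_0\ge0$, a block length $K$ and a subspace $W\subset\mathcal H$ with $\dim W\le m<K$ and $\sup_{f\in\mathcal F}\dist(f,W)\le d_m+\varepsilon$. Let $Q$ be the orthogonal projection onto $\spann\{\varphi_{j_0},\dots,\varphi_{j_0+K-1}\}\cong\C^K$. Let $G$ be the $K\times K$ lower triangular block $(a_{ij})_{j_0\le i,j<j_0+K}$, so that the rows of $G$ are the vectors $Qf_i$.

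We bound the row norms and the distance to a small subspace. Since $Q$ annihilates $V_{j_0}$, we have $\norm{Qf_i}\le\norm{(I-\Pi_{V_{j_0}})f_i}\le\sigma_{j_0}$. Moreover, each row lies within $d_m+\varepsilon$ of the subspace $\widetilde W\coloneqq QW$, which has dimension at most $m$.

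Now split $G=GP+GP^\perp$, where $P$ is the projection onto $\widetilde W$. This gives
\begin{equation}
\|GP\|_F^2\le K\sigma_{j_0}^2,
\qquad
\|GP^\perp\|_F^2\le K(d_m+\varepsilon)^2 .
\end{equation}
Choose an orthonormal basis of $\C^K$ adapted to $\widetilde W\oplus\widetilde W^\perp$. Then $|\det G|^2=\det(GG^*)$. By Fischer's inequality for the $2\times2$ block Gram matrix of the columns, $\det(GG^*)$ is at most the product of the Gram determinants of the two column blocks. Applying AM--GM to the eigenvalues of each block (of sizes $m$ and $K-m$) yields
\begin{equation}
\prod_{i=j_0}^{j_0+K-1}a_{ii}^2=|\det G|^2
\le\Bigl(\tfrac{K}{m}\Bigr)^m\sigma_{j_0}^{2m}\Bigl(\tfrac{K}{K-m}\Bigr)^{K-m}(d_m+\varepsilon)^{2(K-m)} .
\end{equation}
This step is the only nontrivial one. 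The main obstacle is to get the Fischer/AM--GM bookkeeping right, in particular the case $\dim\widetilde W<m$, which is handled by enlarging $\widetilde W$ to dimension $m$.

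\textbf{Conclusion.} Since $\sigma$ is non-increasing, the diagonal satisfies $a_{ii}\ge\gamma\sigma_i\ge\gamma\sigma_{j_0+K}$ for every $i$ in the block. Hence $\gamma^{2K}\sigma_{j_0+K}^{2K}$ is bounded by the right-hand side above. Let $\varepsilon\to0$ in what follows.

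For \eqref{eq:DPWi}, take $j_0=0$, $K=2n$, $m=n$ and use $\sigma_0\le1$. This gives $\gamma^{4n}\sigma_{2n}^{4n}\le2^{2n}d_n^{2n}$, i.e.\ $\sigma_{2n}\le\sqrt2\gamma^{-1}\sqrt{d_n}$.

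For \eqref{eq:DPW37}, take $j_0=2s$, $K=2s$, $m=s$. This gives $\gamma^{4s}\sigma_{4s}^{4s}\le2^{2s}\sigma_{2s}^{2s}d_s^{2s}$, and taking the $4s$-th root yields $\sigma_{4s}\le\sqrt2\gamma^{-1}\sqrt{\sigma_{2s}d_s}$.
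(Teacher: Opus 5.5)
Your proposal is correct and follows essentially the same route as the paper. The paper simply invokes the determinant (volume) argument of \cite{DPW13}, namely the triangular-block estimate via Fischer's inequality and AM--GM, specialized to block length $K=2m$ starting at index $0$ resp.\ $2s$. Your replacement of the minimizing subspace by one with error at most $d_m+\varepsilon$, followed by $\varepsilon\to0$, is exactly the modification the paper indicates for non-compact $\mathcal F$.
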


Although \cite{DPW13} assumes compactness, its proof only uses a minimizing subspace for $d_m$.
For bounded $\mathcal F$, choose instead a subspace with error at most $d_m+\varepsilon$, apply the same argument, and let $\varepsilon\to0$.
This proves the stated form.
Below, we prepare our main proofs.

If $R=0$, then $K=0$ and $\mathcal H_K=\{0\}$, all claims in Section \ref{sec:MainResults} are trivial; hence assume $R>0$.
We apply Proposition~\ref{prop:dpw} with $\mathcal H=\mathcal H_K$ and $\gamma=\tfrac12$, so that $\sqrt2 \gamma^{-1}=2\sqrt2$, and with the normalized dictionary
\begin{equation}\label{eq:Fdef}
    \mathcal{F}\coloneqq R^{-1}\mathcal{K}\subset B_{\mathcal H_K}.
\end{equation}
Note that $\sigma_0=1$ by the definition \eqref{eq:R} of $R$.
Every selected $f_j=a_{x_j}/R \in \mathcal F$ is a translate and hence an admissible sampling node.
Writing $P_j\coloneqq\{x_0,\dots,x_{j-1}\}$ for the selected nodes, so that $V_j=V_{P_j}$, we have $\dist(a_x/R,V_j)=\mathrm{Pow}_{P_j}(x)/R$ for every $x\in D$.
Hence $\sigma_j=\norm{\mathrm{Pow}_{P_j}}_\infty/R$, and the selection criterion \eqref{eq:weak-greedy} becomes
\begin{equation}\label{eq:pgreedy-rule}
    \mathrm{Pow}_{P_j}(x_j)\ \ge\ \tfrac12 \norm{\mathrm{Pow}_{P_j}}_\infty ,
\end{equation}
a relaxation of the \emph{$P$-greedy} point selection of the kernel literature \cite{SH17}, which takes a maximizer.
Finally, $d_n(\mathcal{F})_{\mathcal H_K}=d_n(\mathcal{K})_{\mathcal H_K}/R$ by homogeneity, so that Propositions~\ref{prop:gm} and~\ref{prop:cn} yield
\begin{equation}\label{eq:greedy-vs-g}
g_m^{\lin}(B_{\mathcal H_K})_\infty = \tau_m(\mathcal{K})_{\mathcal{H}_K} \le R \sigma_m,
\qquad
d_n(\mathcal{F})_{\mathcal H_K} = \frac{d_n(\mathcal{K})_{\mathcal H_K}}{R} = \frac{c_n(B_{\mathcal H_K})_\infty}{R}.
\end{equation}
In view of the first relation in \eqref{eq:greedy-vs-g}, it suffices to bound
$\sigma_m$.
By Lemma~\ref{lem:interp} and the proof of Proposition~\ref{prop:gm}, each resulting bound is realized by kernel interpolation at $P_m$, with worst-case error at most $\norm{\mathrm{Pow}_{P_m}}_\infty=R\sigma_m$.

\begin{remark}[Nested designs]\label{rem:nested}
The greedy construction yields nested designs $P_0\subset P_1\subset\cdots$ with at most one additional node per $P_m$.
Hence, all bounds hold along one sequence, the previous samples are reused, and the interpolant can be updated incrementally.
In contrast, guarantees based on drawing or subsampling a separate design for each budget may not have this property.
\end{remark}

\begin{remark}[The case $\gamma=1$]\label{rem:gamma1}
For $\gamma<1$, the selection only requires an approximate maximizer, whereas $\gamma=1$ is the classical $P$-greedy rule and requires the supremum of the power function to be attained.
This holds, for example, when $D$ is finite or when $D$ is compact and $K$ is continuous.
We use $\gamma=\tfrac12$ throughout; the general constants follow by replacing $2\sqrt2$ with $\sqrt2\gamma^{-1}$, with the best constants obtained for $\gamma=1$ whenever it is admissible.
\end{remark}
 
\subsection{Proofs of main results}
\begin{proof}[Proof of Theorem~\ref{thm:rates}]
Set $M\coloneqq\sup_{0\le k\le n}L(k) c_k/R$.
Dividing \eqref{eq:carl} by $R$ and using that $g_k^{\lin}\le R \sigma_k$ by \eqref{eq:greedy-vs-g}, it suffices to prove that
\begin{equation}\label{eq:carl-sd}
    \sigma_m \le C_L \frac{M}{L(m)}\qquad(0\le m\le n).
\end{equation}
Since $c_0=R$, we have $M\ge L(0) c_0/R=L(0)>0$, so that
\begin{equation}\label{eq:Qdef}
    Q\coloneqq\max_{0\le m\le n}\frac{\sigma_m L(m)}{M}
\end{equation}
is a well-defined maximum over finitely many terms, and \eqref{eq:carl-sd} is equivalent to $Q\le C_L$.
Let $m_\ast$ be an index at which the maximum in \eqref{eq:Qdef} is attained.
If $Q=0$, there is nothing to prove, so we assume $Q>0$ and distinguish two cases according to the size of $m_\ast$.
 
For $m_\ast\le3$, we get by monotonicity that $\sigma_{m_\ast}\le\sigma_0=1$.
Together with $M\ge L(0)$, this gives
\begin{equation}\label{eq:carl-small}
    Q=\frac{\sigma_{m_\ast}L(m_\ast)}{M} \le \frac{L(m_\ast)}{L(0)} \le \frac{L(3)}{L(0)}.
\end{equation}
Otherwise, we set $s\coloneqq\lfloor m_\ast/4\rfloor$, so that $s\ge1$, $4s\le m_\ast\le 4s+3\le 8s$ and $2s\le m_\ast/2\le n$.
By monotonicity, the self-improving recursion \eqref{eq:DPW37}, and the second relation in \eqref{eq:greedy-vs-g}, we obtain
\begin{equation}\label{eq:carl-rec}
    \sigma_{m_\ast}\le\sigma_{4s}\le 2\sqrt2 \sqrt{\sigma_{2s} c_s/R}.
\end{equation}
Further, the definition of $Q$ gives $\sigma_{2s}\le Q M/L(2s)$, and the definition of $M$ gives $c_s/R\le M/L(s)$.
Inserting these into \eqref{eq:carl-rec} and using that $L(m_\ast)\le L(8s)$, we obtain
\begin{equation}\label{eq:carl-large}
    Q=\frac{\sigma_{m_\ast}L(m_\ast)}{M} \le 2\sqrt2 \sqrt Q \frac{L(8s)}{\sqrt{L(2s)L(s)}} .
\end{equation}
Dividing \eqref{eq:carl-large} by $\sqrt Q>0$ and squaring gives $Q\le 8 L(8s)^2/(L(2s)L(s)) \leq 8\Lambda_L$ by \eqref{eq:LambdaL}.
The two cases give $Q\le L(3)/L(0)$ and $Q\le 8\Lambda_L$, respectively, and the claim follows.
\end{proof}
 
\begin{proof}[Proof of Theorem~\ref{thm:geom}]
By \eqref{eq:DPWi} and the second relation in \eqref{eq:greedy-vs-g}, we have $\sigma_{2n}\le  \sqrt{8c_n/R}$.
Multiplying by $R$ and using the first relation in \eqref{eq:greedy-vs-g} then gives
\begin{equation}
    g^{\lin}_{2n}\le R \sigma_{2n}\le R\sqrt{8c_n/R}= \sqrt{8R c_n}.
\end{equation}
This concludes the proof.
\end{proof}

\begin{proof}[Proof of Theorem~\ref{thm:log}]
Put $\delta\coloneqq c_n/R$.
We iterate the self-improving recursion \eqref{eq:DPW37} until $\sigma_{2^{k+1}n}\le 16\delta$, and then count the points used.
If $8\delta\ge1$, there is nothing to prove:
by \eqref{eq:greedy-vs-g} and monotonicity, we have $g_m^{\lin}\le R\sigma_m\le R\sigma_0=R\le16R\delta=16c_n$ for all $m \in \N$.
We may therefore assume $8\delta<1$.

By \eqref{eq:DPW37} with $s=2^kn\ge n$, the monotonicity $c_{2^kn}\le c_n$, and the second relation in \eqref{eq:greedy-vs-g}, we get
\begin{equation}\label{eq:log-rec}
\sigma_{2^{k+2}n}\le \sqrt{8\sigma_{2^{k+1}n}\delta},\qquad k\ge0,
\end{equation}
with the initial value $\sigma_{2n}\le\sigma_0=1$.
Dividing \eqref{eq:log-rec} by $8 \delta$ turns it into $\sigma_{2^{k+2}n}/(8\delta)\le\sqrt{\sigma_{2^{k+1}n}/(8\delta)}$, so that iteration and $\sigma_{2n}\le1$ give
\begin{equation}\label{eq:log-iter}
    \sigma_{2^{k+1}n}\le 8\delta \Big(\frac{1}{8\delta}\Big)^{1/2^k}, \qquad k\ge0 .
\end{equation}
Since $8\delta<1$, the factor $(8\delta)^{-1/2^k}$ is at most $2$ precisely when $2^{-k}\log_2\frac{1}{8\delta}\le1$, that is, when $k\ge\log_2\log_2\frac{1}{8\delta}$.
We therefore choose $k\coloneqq\max\bigl\{0,\lceil\log_2\log_2\tfrac{1}{8\delta}\rceil\bigr\}$, for which \eqref{eq:log-iter} gives $\sigma_{2^{k+1}n}\le16\delta$.
By the definition of $k$, we have
\begin{equation}\label{eq:EstimateSamples}
    2^{k+1}\le4\max\{1,\log_2 \tfrac{1}{8\delta}\}\le4\log_2\bigl(2+\tfrac{1}{\delta}\bigr)=4\log_2\bigl(2+\tfrac{R}{c_n}\bigr).
\end{equation}
We can multiply \eqref{eq:EstimateSamples} by $n$.
Then, using the monotonicity of sampling numbers and \eqref{eq:greedy-vs-g}, we conclude  for $m = \lceil 4n\log_2(2+R/c_n) \rceil$ that $g_m^{\lin}\le g_{2^{k+1}n}^{\lin}\le R\sigma_{2^{k+1}n}\le16R\delta=16c_n$.
\end{proof}

\begin{proof}[Proof of Theorem \ref{thm:basis_sel}]
Set $E\coloneqq\overline{\spann\mathcal A} \subset X$, and consider $J\colon E^*\to B(\mathcal A)$ given by $(J\ell)(a)=\ell(a)$, which is injective by the definition of $E$.
Note that $J\ell$ is indeed a bounded function by compactness of $\mathcal A$.
View $\mathcal{F}\coloneqq J(B_{E^*})$ as the unit ball of $E^*$ with the usual (dual) norm.
For fixed nodes $a_1,\ldots,a_k\in\mathcal A$ and coefficient functions $\varphi_i\in B(\mathcal A)$, the Hahn--Banach theorem gives
\begin{equation}
    \sup_{\ell\in B_{E^*}} \biggl\|J\ell-\sum_{i=1}^k\ell(a_i)\varphi_i\biggr\|_\infty
    =\sup_{a\in\mathcal A} \biggl\|a-\sum_{i=1}^k\varphi_i(a)a_i\biggr\|_X \geq \sup_{a\in\mathcal A}\dist_X \bigl(a,\spann\{a_1,\ldots,a_k\}\bigr),
\end{equation}
and hence $\tau_m(\mathcal A)_X\le g_m^{\lin}(\mathcal F)_\infty$.

Now fix a subspace $Y\subset X$ of dimension $r\le m$ with basis $y_1,\ldots,y_r \in X$, and let $\varepsilon>0$.
For each $a\in\mathcal A$, choose $y(a)=\sum_{j=1}^r\eta_j(a)y_j\in Y$ with $\eta_j(a) \in \C$, $j=1,\ldots,r$, such that $\|a-y(a)\|_X\le\dist_X(a,Y)+\varepsilon$.
Next, we study the properties of the $\eta_j$.
Let \(T\colon \mathbb C^r\to Y\) with \(T\mathbf c=\sum_{j=1}^r c_jy_j\) be the associated basis isomorphism.
Since \(\|y(a)\|_X\le2\sup_{b\in\mathcal A}\|b\|_X+\varepsilon\) and \(T^{-1}\) is bounded, we get \(\eta_j \in B(\mathcal A)\).
For each $\ell\in B_{E^*}$, let \smash{$\widetilde\ell\in B_{X^*}$} denote a Hahn--Banach extension.
Then, we have
\begin{align}
\sup_{\ell\in B_{E^*}}
\dist_{B(\mathcal A)}\bigl(J\ell,\spann\{\eta_1,\ldots,\eta_r\}\bigr)
&\le
\sup_{\ell\in B_{E^*}}
\biggl\|J\ell-\sum_{j=1}^r\widetilde\ell(y_j)\eta_j\biggr\|_\infty
\notag =
\sup_{\ell\in B_{E^*}}\sup_{a\in\mathcal A}
\bigl|\widetilde\ell\bigl(a-y(a)\bigr)\bigr|
\notag\\
&\le
\sup_{a\in\mathcal A}\|a-y(a)\|_X \le
\sup_{a\in\mathcal A}\dist_X(a,Y)+\varepsilon.
\end{align}
Taking the infimum over $Y$ with $\dim(Y) \leq m$ and letting $\varepsilon \to 0$ gives $d_m(\mathcal F)_{B(\mathcal A)}\le d_m(\mathcal A)_X$.
The first inequality in \eqref{eq:KPUU-sqrtn} now gives
\begin{equation}\label{eq:ProofFinal}
    \tau_{2m}(\mathcal A)_X
    \le g_{2m}^{\lin}(\mathcal F)_\infty
    \le C\sqrt m\,d_m(\mathcal F)_{B(\mathcal A)}
    \le C\sqrt m\,d_m(\mathcal A)_X,
\end{equation}
which concludes the proof.
\end{proof}
The factor $\sqrt m$ in \eqref{eq:ProofFinal} cannot be improved, not even under constant oversampling, as the following example shows.
Fix \(c>1\) and an integer \(L>c\).
By the construction in the proof of \cite[Prop.~5.13]{FR13}, for every sufficiently large prime \(m\) there is an explicit \(m\times m^2\) matrix with complex entries of coherence \(m^{-1/2}\).
In fact, it is observed that fixing the translation of the Alltop vector and varying the modulation yields an orthonormal basis of $\C^m$.
Different translations yield different bases.
Hence, we get $m$ different orthonormal bases of $\C^m$.
Select \(L\leq m\) such bases and denote their synthesis matrices by \(\mathbf U_1,\ldots,\mathbf U_L\in\mathbb C^{m\times m}\).
The coherence bound implies
\begin{equation}
    \left|(\mathbf U_r^*\mathbf U_\ell)_{ij}\right|=m^{-1/2}, \qquad r\neq \ell. 
\end{equation}
We consider the space $X_m$ of $m\times L$ matrices equipped with the $\ell_\infty$-norm of their entries and the dictionary $\mathcal{A}_m$ of canonical unit matrices, namely 
\begin{equation}
    X_m=\ell_\infty^{m\times L}, \qquad \mathcal A_m=\{\mathbf e_{j,\ell}\in \mathbb{C}^{m\times L}:\ 1\leq j\leq m,1\leq\ell\leq L\}, 
\end{equation}
where the matrices $\mathbf e_{j,\ell} \in \mathbb{C}^{m\times L}$ contain just one non-vanishing entry $1$ at the position $(j,\ell)$.

Since \(\lfloor cm\rfloor<Lm\), every selection of \(\lfloor cm\rfloor\) elements of \(\mathcal A_m\) omits a coordinate vector.
Hence, it holds $\tau_{\lfloor cm\rfloor}(\mathcal A_m)_{X_m}=1$.
Consider now the \(m\)-dimensional subspace of $\C^{m\times L}$ given by 
\begin{equation}
    Y_m\coloneqq\left\{ \mathbf U(\mathbf x)\coloneqq (\mathbf U_1^*\mathbf x| \dots |\mathbf U_L^*\mathbf x) \in \C^{m\times L} : \mathbf x\in\mathbb C^m \right\}.
\end{equation}
For any \(\mathbf e_{j,\ell}\in \mathcal{A}_m\) choose \(\mathbf x=\mathbf U_\ell\mathbf e_j\), where $\mathbf e_j$ is the $j$-th canonical unit vector in $\C^m$ and also the $\ell$-th column of $\mathbf e_{j,\ell}$.
Then the \(\ell\)-th column in $\mathbf U(\mathbf x)$ is exactly \(\mathbf e_j\), while every remaining column has entries with  modulus not larger than \(m^{-1/2}\).
Therefore
\begin{equation}
    \|\mathbf e_{j,\ell}-\mathbf U(\mathbf U_{\ell}\mathbf e_j)\|_{X_m} \leq m^{-1/2}
\end{equation}
and hence $d_m(\mathcal A_m)_{X_m}\leq m^{-1/2}$.
Consequently, we obtain
\begin{equation}
    \frac{\tau_{\lfloor cm\rfloor}(\mathcal A_m)_{X_m}} {d_m(\mathcal A_m)_{X_m}} = \frac{1} {d_m(\mathcal A_m)_{X_m}} \geq \sqrt{m}.
\end{equation}
Thus, the factor \(\sqrt{m}\) cannot generally be replaced by \(o(\sqrt{m})\), even under constant oversampling.
For the precise index in Theorem \ref{thm:basis_sel}, take \(c=2\) and \(L=3\).
The example is a sequence of finite compact sets rather than one fixed compact set; that suffices to establish sharpness of a universal estimate.

\section{Examples and numerical illustration}\label{sec:numerics}
For the unit ball of a real RKHS $\mathcal H_K$ on a domain $D\subset\R^d$, we want to compare the linear sampling widths $g^{\lin}_m(B_{\mathcal H_K})_\infty=\inf_{|P|\le m}\|\mathrm{Pow}_P\|_\infty$ against the Gelfand widths.
By definition, we have $c_m(B_{\mathcal H_K})_\infty\le g^{\lin}_m(B_{\mathcal H_K})_\infty$, and Theorem~\ref{thm:rates} bounds the two against each other.
As a numerical consistency check, we compute the recovery error on three kernel families: the Legendre kernel $K_s$ with $s=2,3$ on $[-1,1]$, the periodic mixed-Sobolev kernel $H^m_{\mathrm{mix}}([0,1]^d)$ for $(d,m)\in\{2,3\}\times\{1,2,3\}$ except $(3,3)$, and the band-limited Paley--Wiener kernel $K_c$ with $2c/\pi\in\{20,40,80\}$ on $[-1,1]$.

As an estimate of $g_m^{\lin}$, we use
$\widehat g_m^{\lin}\coloneqq\max_{x\in X_{\rm sel}}\mathrm{Pow}_{P_m}(x)$
for the $P$-greedy set $P_m$ and the candidate grid $X_{\rm sel}$.
Algorithm~\ref{alg:pgreedy} summarizes the implementation together with the resulting kernel interpolant.
Its selection rule is with $\gamma=1$ relative to $X_{\rm sel}$.
Thus, the greedy criterion holds on all of $D$ with $\gamma=\tfrac12$ only if the grid resolves $\sup_{x \in D}\mathrm{Pow}_{P_j}(x)$ within a factor $2$, which we cannot verify numerically.
Two opposite biases therefore act on the computed  errors. Taking the greedy set rather than the optimal one raises them, by \eqref{eq:greedy-vs-g}, while evaluating its supremum on a grid lowers them.
Instead of solving the generally ill-conditioned Gram system, we use the Newton basis produced by the $P$-greedy recursion \cite{MS09,PS11}.
This basis is the $\mathcal H_K$-orthonormalization of the selected translates, which, equivalently, is a pivoted Cholesky factorization.
So the coefficients $\beta_j=\ip{f}{v_j}_{\mathcal H_K}$ are obtained from the samples by the forward substitution in Algorithm~\ref{alg:pgreedy} and satisfy $\sum_j|\beta_j|^2\le\norm{f}_{\mathcal H_K}^2$. Although no matrix inversion is required, the approach becomes unstable as the pivot values $\mathrm{Pow}_{P_j}(x_j)$ approach zero.
Stabilized variants are discussed in \cite{WSH21}.
The grids are Chebyshev-type for the Legendre kernel, and equidistant ($d=1$) or scrambled Sobol ($d>1$) for the other examples.
Their sizes are specified for each example individually.
Further implementation details and the code used to produce the figures are available on GitHub\footnote{\url{https://github.com/Zeppo1994/Gelfand-Numbers-and-P-Greedy.git}}.

For the Gelfand widths, we compute lower estimates as follows.
Let $\rho$ be any probability measure on $D$ and let $C_\rho=\int_D a_x\otimes a_x\,\mathrm d\rho(x)$ have eigenvalues $\lambda_0\geq\lambda_1\geq\cdots\geq0$.
For every subspace $V$ of dimension at most $n$, the variational principle then gives
\begin{equation}\label{eq:SpectralTailBound}
\sup_{x\in D}\operatorname{dist}_{\mathcal H_K}(a_x,V)^2
 \geq \int_D\|(I-P_V)a_x\|_{\mathcal H_K}^2 \mathrm d\rho(x)
 =\operatorname{tr}\bigl((I-P_V)C_\rho\bigr)
 \geq\sum_{k=n}^\infty\lambda_k.
\end{equation}
Taking the infimum over $V$ therefore yields the spectral-tail lower bound $c_n(B_{\mathcal H_K})_\infty^2\geq\sum_{k=n}^\infty\lambda_k$, which has been observed in the literature multiple times, see \cite{CoKuSi16},\cite{OP95}.

\begin{algorithm}[tbp]
\small
\captionsetup{skip=2pt}
\caption{$P$-greedy selection and kernel interpolation}\label{alg:pgreedy}
\begin{algorithmic}[1]
\Require kernel $K$, grid $X_{\rm sel}$, node budget $m$, squared-power tolerance $\varepsilon_{\rm pow}>0$, for interpolation: point evaluations of $f$
\State $P_0\gets\varnothing$; $\mathrm{Pow}_{P_0}(x)^2\gets K(x,x)$ for $x\in X_{\rm sel}$; $\widehat m\gets m$
\For{$j=0,\ldots,m-1$}
  \State $x_j\gets\argmax_{x\in X_{\rm sel}}\mathrm{Pow}_{P_j}(x)$
  \If{$\mathrm{Pow}_{P_j}(x_j)^2\le\varepsilon_{\rm pow}$} $\widehat m\gets j$; \textbf{break}  \EndIf\vspace{-.75ex} 
  \State $v_j(x)\gets(K(x,x_j)-\smash[b]{\sum_{\ell<j}}v_\ell(x)v_\ell(x_j))/\mathrm{Pow}_{P_j}(x_j)$ for $x\in D$ \Comment{Newton basis}
  \State $P_{j+1}\gets P_j\cup\{x_j\}$; $\mathrm{Pow}_{P_{j+1}}(x)^2\gets \max\{\mathrm{Pow}_{P_j}(x)^2-\vert v_j(x)\vert^2, 0 \}$ \Comment{power update}
\EndFor\vspace{-.75ex}
\State $\widehat g_{\widehat m}^{\lin}\gets\max_{x\in X_{\rm sel}}\mathrm{Pow}_{P_{\widehat m}}(x)$ \Comment{reported sampling estimate}
\For{$j=0,\ldots,\widehat m-1$} \Comment{optional: stable kernel interpolation}
  \State $\beta_j\gets\bigl(f(x_j)-\smash[b]{\sum_{\ell<j}}\beta_\ell v_\ell(x_j)\bigr)/\mathrm{Pow}_{P_j}(x_j)$ \Comment{forward substitution}
\EndFor\vspace{-.75ex}
\State \Return $P_{\widehat m}$, $\widehat g_{\widehat m}^{\lin}$, and optionally $\Pi_{V_{P_{\widehat m}}}f=\sum_{j<\widehat m}\beta_j v_j$
\end{algorithmic}
\end{algorithm}

\subsection{Examples}\label{sec:Numerical_results}

\begin{figure}[t]
  \centering
  \includegraphics[width=0.9\linewidth]{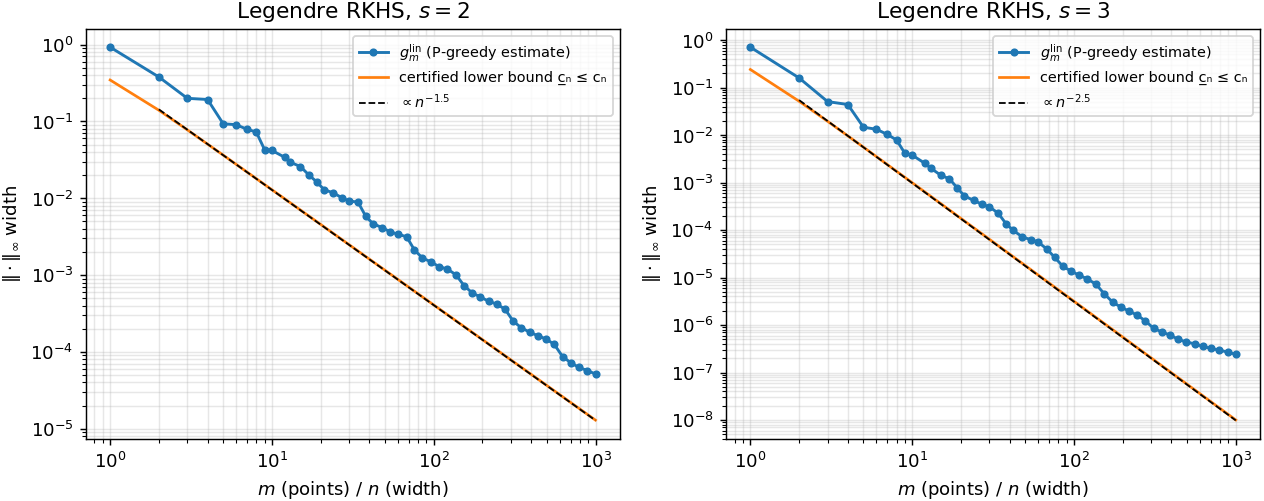}
  \caption{Legendre kernel $K_s$, $s=2, 3$.
  The $P$-greedy estimate of $g_m^{\lin}$ and the
  computed lower Gelfand width estimate $\underline c_n$ decay at the common rate
  $n^{-(s-1/2)}$ (dashed).}
  \label{fig:legendre}

  \vspace{.5cm}
  \includegraphics[width=\linewidth]{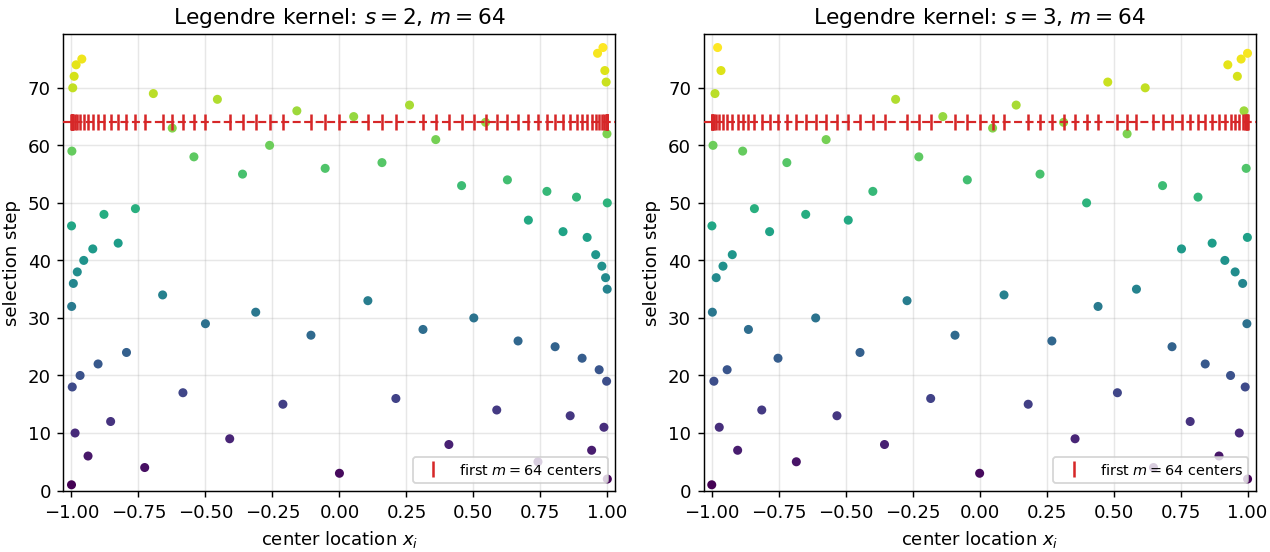}
  \caption{P-greedy design for Legendre kernel $K_s$, $s=2, 3$: selection order $m$ versus node position
  $x\in[-1,1]$.
  The red line at $m=64$ shows points selected up to that threshold.}
  \label{fig:legendre-points}
\end{figure}
\paragraph{Legendre kernel.}
On $D=[-1,1]$ with the Lebesgue measure, let $P_k$ denote the $L^2([-1,1])$-normalized Legendre polynomials, i.e.\ $P_k=\sqrt{(2k+1)/2}\,L_k$.
Following \cite[\S7.3]{PU}, we take the Mercer kernel
\begin{equation}\label{eq:legendre}
  K_s(x,y)=\sum_{k\ge0}\mu_k\,P_k(x)\,P_k(y),
  \qquad
  \mu_k=\frac{1}{1+\bigl(k(k+1)\bigr)^{s}},
\end{equation}
with eigenpairs $(\mu_k,P_k)$.
The eigenvalues decay as $\Theta(k^{-2s})$, giving a bounded kernel for $s>1$.
For the uniform probability measure $\mathrm d\rho=\mathrm dx/2$, the eigenvalues of $C_\rho$ are $\mu_k/2$.
Thus, the spectral-tail estimate \eqref{eq:SpectralTailBound} gives
\begin{equation}\label{eq:legendre-tail-lower}
 c_n(B_{\mathcal H_{K_s}})_\infty^2
 \geq \underline c_{n,s}^2
 \coloneqq
 \frac12\sum_{k=n}^{\infty}
 \frac{1}{1+[k(k+1)]^s},
 \qquad
 \underline c_{n,s}\asymp
 \frac{n^{-(s-1/2)}}{\sqrt{2(2s-1)}}.
\end{equation}
The \(P\)-greedy computations use \(20\,000\) Chebyshev candidate points.
In the following, we consider $s=2,3$.
Figure~\ref{fig:legendre} shows the $P$-greedy estimate of $g_m^{\lin}$ and the computed lower width estimate $\underline c_n$.
For $s=3$ and $m$ large, we hit the numerical precision limit.
The found P-greedy design clusters towards the endpoints, see Figure \ref{fig:legendre-points}.

\paragraph{Periodic mixed-Sobolev kernel.}
\begin{figure}[t]
  \centering
  \includegraphics[width=\linewidth]{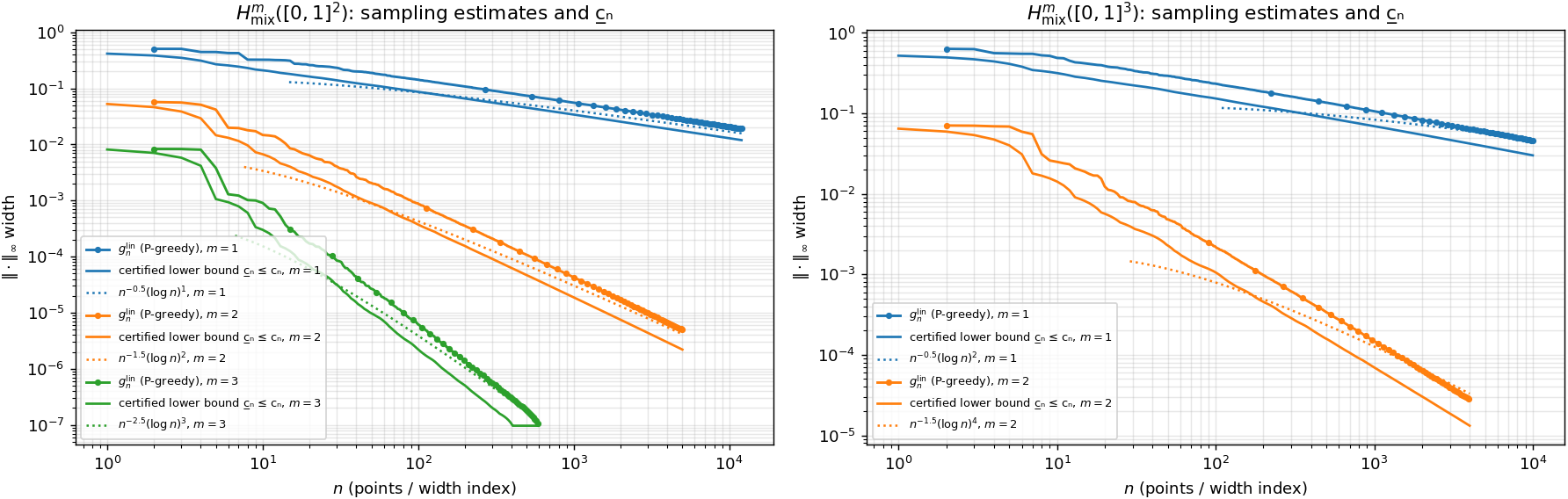}
  \caption{Periodic mixed-Sobolev kernel $H^m_{\mathrm{mix}}([0,1]^d)$: $P$-greedy estimates of $g_n^{\lin}$ versus the
  computed lower Gelfand width estimate for $(d,m)\in\{2,3\}\times\{1,2,3\}$ except $(3,3)$.
  The mixed-smoothness rate $n^{-(m-1/2)}(\log n)^{(d-1)m}$ is overlaid as an asymptotic guide.}
  \label{fig:periodic-rates}
  \vspace{.5cm}

  \centering
  \includegraphics[width=\linewidth]{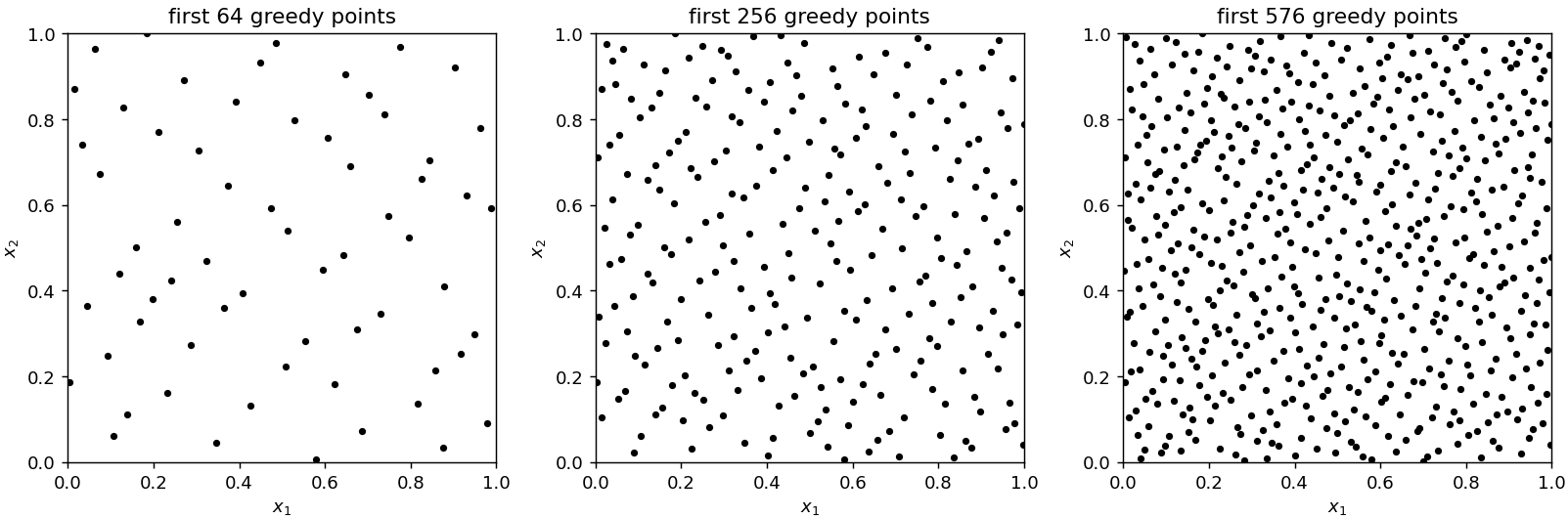}
  \caption{$P$-greedy designs in $[0,1]^2$ for $H^m_{\mathrm{mix}}$ with $64,256,576$ selected points.}
  \label{fig:periodic-points}
\end{figure}
On the torus $D=\mathbb{T}^d = [0,1]^d$, we take the reproducing kernel of the mixed-smoothness Sobolev space $H^m_{\mathrm{mix}}(\mathbb{T}^d)$, cf.\ \cite[Chapt.\ 3]{DuTeUl16}, which is the $d$-fold tensor product of the univariate kernel (\cite[p.\ 318]{BerlinetThomasAgnan})
\begin{equation}\label{eq:mixed}
  k_1(x,y)=1+\frac{(-1)^{m-1}}{(2m)!} B_{2m} \bigl(\{x-y\}\bigr),
  \qquad
  K(\mathbf x,\mathbf y)=\prod_{j=1}^{d}k_1(x_j,y_j),
\end{equation}
where $B_{2m}$ is the Bernoulli polynomial of degree $2m$, $m\in\mathbb N$, and $\{\cdot\}$ is the fractional part.
Its Fourier diagonalization is $k_1(x,y)=\sum_{k\in\mathbb Z}\mu_k\,e^{2\pi i k(x-y)}$ with
$\mu_0=1$ and $\mu_k=(2\pi|k|)^{-2m}$ for $k\neq0$.
We consider the nonincreasing rearrangement of the tensor sequence $(\mu_{k_1}\cdots\mu_{k_d})_{\mathbf{k}\in \Z^d}$, which is denoted by $(\lambda_k)_{k\in \N_0}$.
The asymptotic and preasymptotic behavior of such sequences has been studied intensively, see \cite{KuSiUl15}. Since the Fourier modes have constant modulus, the subspace spanned by the first $n$ modes has the same squared pointwise residual $\sum_{j=n}^\infty\lambda_j$ at every point.
Hence, the spectral-tail lower bound \eqref{eq:SpectralTailBound} is attained for odd $n$ and
\begin{equation}\label{eq:periodic-tail}
 c_n(B_{H^m_{\mathrm{mix}}(\mathbb T^d)})_\infty^2
 =\sum_{j=n}^{\infty}\lambda_j \asymp n^{-2m+1}(\log n)^{2m(d-1)}.
\end{equation}
This agrees with \cite[Thm.~3.4]{CoKuSi16}.
We provide numerical  results for $(d,m)\in\{2,3\}\times\{1,2,3\}$ except $(3,3)$.
It is already known that $c_n \asymp g_n^{\lin}$, where the $g_n^{\lin}$ have been realized by Smolyak sparse grids \cite{Bungartz_Griebel_2004}, \cite{DuTeUl16} which turned out to be optimal \cite{Te93}.
Recently, it has been shown in \cite{KPUU23} that subsampled random points also serve as an optimal design.
For $d=2$ and $d=3$, the $P$-greedy grid contains $120\,000$ and $80\,000$ points, respectively.
Figures~\ref{fig:periodic-rates} and~\ref{fig:periodic-points} show this very behavior also with $P$-greedy points.
Across the illustrated cases, the ratios of the $P$-greedy estimates to $\underline c_n$ remain bounded, even though at reachable $n$ the logarithmic factor of the rate is far from developed.
Thus, the direct ratio, and not the asymptotic rate line, is the correct comparison.
Here, the selected greedy set fills $[0,1]^2$ quasi-uniformly, see Figure~\ref{fig:periodic-points} for three configurations, reflecting the stationarity of the kernel and the absence of boundary concentration.

\paragraph{Band-limited (Paley--Wiener) kernel.}
\begin{figure}[t]
  \centering
  \includegraphics[width=\linewidth]{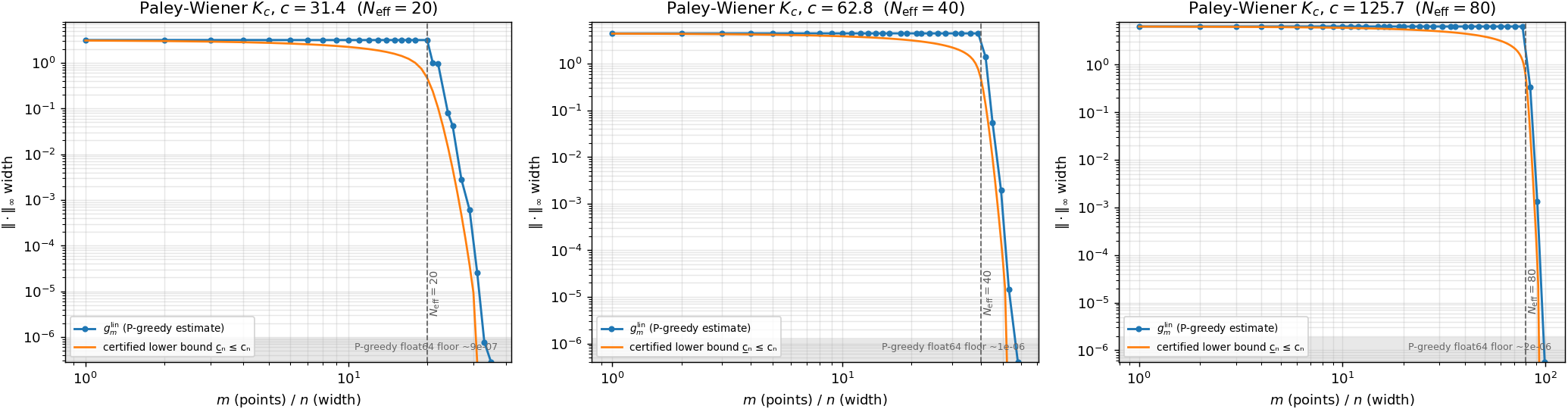}
  \caption{Paley--Wiener kernel $K_c$, $N_{\mathrm{eff}}\in\{20,40,80\}$.
  The $P$-greedy estimate of $g_m^{\lin}$ and the lower Gelfand width estimate are all $\approx$ flat up to $N_{\mathrm{eff}}$ (dashed
  line) and then decay super-exponentially into the float64 floor (shaded).}
  \label{fig:paley-widths}

  \vspace{.5cm}
  \includegraphics[width=\linewidth]{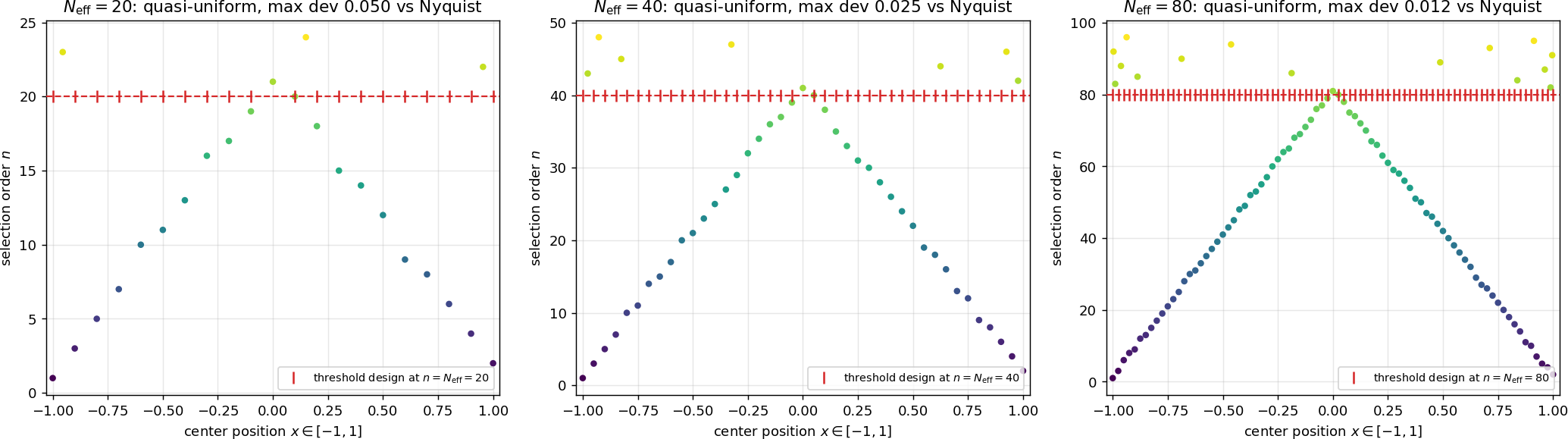}
  \caption{P-greedy design for Paley--Wiener kernel $K_c$,
  $N_{\mathrm{eff}}\in\{20,40,80\}$: selection order $m$ versus node position
  $x\in[-1,1]$.
  The red line at $m=N_{\mathrm{eff}}$ shows points selected up to that threshold.}
  \label{fig:paley-points}
\end{figure}
On $D=[-1,1]$, we consider the Paley--Wiener space of band-limited functions with bandwidths $[-c,c]$.
Its reproducing kernel reads as
\begin{equation}\label{eq:paleywiener}
  K_c(s,t)=\frac{\sin \bigl(c(s-t)\bigr)}{\pi(s-t)},
  \qquad K_c(x,x)=\frac{c}{\pi}.
\end{equation}
The singular values for the embedding into $L_2([-1,1])$ are the
square roots $\sigma_k=\sqrt{\lambda_k(c)}$ of the Slepian--Pollak
eigenvalues \cite{SlPo61,BonamiKaroui17} of the time-and-band-limiting
integral operator on $[-1,1]$. For $\mathrm d\rho=\mathrm dx/2$, the
spectral-tail estimate gives
\begin{equation}\label{eq:paley-tail-lower}
 c_n(B_{\mathcal H_{K_c}})_\infty^2
 \geq \underline c_{n,c}^2
 \coloneqq
 \frac12\sum_{k=n}^{\infty}\lambda_k(c)
 =
 \frac12\left(
 N_{\mathrm{eff}}-\sum_{k=0}^{n-1}\lambda_k(c)
 \right),
 \qquad
 N_{\mathrm{eff}}=\frac{2c}{\pi}.
\end{equation}
For the numerical experiment, we instead use a numerically tractable lower bound originating from the Gauss--Legendre probability measure $\rho_q=\sum_{\ell=1}^q\omega_\ell\delta_{x_\ell}$ on $[-1,1]$.
Let \smash{$\vartheta_0^{(q)}\geq\cdots\geq \vartheta_{q-1}^{(q)}\geq0$} be the eigenvalues of the covariance
matrix
\begin{equation}
    \mathbf A_q=\bigl(\sqrt{\omega_\ell}
K_c(x_\ell,x_r)\sqrt{\omega_r}\bigr)_{\ell,r=1}^q.
\end{equation}
Since $\|f\|_\infty\geq\|f\|_{L_2(\rho_q)}$, every such discrete
probability measure gives
\begin{equation}\label{eq:paley-discrete-lower}
c_n(B_{\mathcal H_{K_c}})_\infty^2
\geq
\bigl(\underline c_{n,c}^{(q)}\bigr)^2
\coloneqq
\sum_{k=n}^{q-1}\vartheta_k^{(q)},
\qquad 0\leq n<q.
\end{equation}
For fixed $n$, this discrete tail converges to the spectral tail in \eqref{eq:paley-tail-lower} as the quadrature is refined.
We use $q=210,370,690$ for $N_{\mathrm{eff}}=20,40,80$, respectively.
For $n\gtrsim N_{\mathrm{eff}}$, Theorem~\ref{thm:rates} does not reproduce this cliff.
In fact, for any weight function $L(k) \coloneqq (1+k)^{\alpha}$ admitted by Remark~\ref{rem:RV}, the supremum $\sup_{k\le n}L(k)c_k$ is governed by the plateau near $N_{\mathrm{eff}}$, 
yielding only
\begin{equation}
    g_n^{\lin}\lesssim 
C_L R \frac{L(N_{\mathrm{eff}})}{L(n)}\lesssim_\alpha \sqrt{c}\Big(\frac{c}{n}\Big)^{\alpha} 
\end{equation}
rather than super-exponential decay. 
Thus, we rather compare with Theorem~\ref{thm:geom}. 

We use three bandwidths, $N_{\mathrm{eff}}\in\{20,40,80\}$, whose cliffs move to the right in proportion to $c$. Because the kernel is band-limited, the Gram matrix $\mathbf G$ with $N\gg N_{\mathrm{eff}}$ nodes has numerical rank
$\approx N_{\mathrm{eff}}$.
For $N_{\mathrm{eff}}=20,40,80$, respectively, the $P$-greedy grids contain $8\,000$, $8\,000$, and $10\,000$ points.
Figure~\ref{fig:paley-widths} shows the numerical results.
The $P$-greedy estimates of $g_m^{\lin}$ and the lower Gelfand width estimate stay approximately flat up to $N_{\mathrm{eff}}$ and then decay rapidly.
Across the cliff, the sampling estimates and width estimate differ by up to about one decade over the few indices before they reach $\tau$.

We do not claim that this experiment illustrates Theorem~\ref{thm:geom}. After the cliff its bound $\sqrt{8R c_n}$ is several orders of magnitude weaker than the observed values, while the pre-existing (non-constructive) bound \eqref{eq:KPUU-sqrtn} is stronger there.
The experiment shows only that greedy sampling remains competitive with the computed Gelfand width estimates well past the range in which our sufficient conditions yield reasonable bounds.
After $n\approx N_{\mathrm{eff}}$, all computed values  rapidly approach the double-precision floor (shaded).
Interestingly, the greedy approach places points quasi-uniformly in $[-1,1]$ at the Nyquist spacing $2/N_{\mathrm{eff}}$, see Figure~\ref{fig:paley-points} for three configurations.
 
\paragraph{Acknowledgment.}
TU and KP are supported by the German Research Foundation (DFG) with grant Ul403/4-1.
Several of the presented relations and the code were developed with the assistance of LLMs by Anthropic and OpenAI.
Additional verification, presentation and writing were carefully carried out by the authors.

\bibliographystyle{abbrv}

\end{document}